\documentclass[12pt]{article}
\usepackage{amsmath, amsthm, amsfonts, amssymb}
\usepackage{mathrsfs}
\usepackage{bbm}
\usepackage{amssymb}
\usepackage{txfonts}
\usepackage{graphicx}
\usepackage{wrapfig}

\hfuzz2pt % Don't bother to report over-full boxes if over-edge is < 2pt
% Line spacing -----------------------------------------------------------
\newlength{\defbaselineskip}
\setlength{\topmargin}{.0cm} \voffset=-1cm \hoffset=-1cm
\oddsidemargin=1.8cm \evensidemargin=1.6cm
\setlength{\textwidth}{14.4 truecm}
\setlength{\textheight}{23truecm}
\setlength{\defbaselineskip}{\baselineskip}

\newcounter{marnote}

\newcommand{\setlinespacing}[1]%
           {\setlength{\baselineskip}{#1 \defbaselineskip}}

%%%%%%%%%%%%%%%%%%%%%%%%%%%%%%%%%%%%%%%%%%%%%%%%%%%%%%%%%%%%%%%%%%%%%%%%%%
% THEOREMS ---------------------------------------------------------------
\theoremstyle{plain}
\newtheorem{theorem}{Theorem}[section]

\newtheorem{prop}[theorem]{Proposition}
\theoremstyle{definition}
\newtheorem{definition}{Definition}[section]
\theoremstyle{remark}

\numberwithin{equation}{section}

%%% ----------------------------------------------------------------------
%\setlength{\tclineskip}{1.05\baselineskip}
%%% ----------------------------------------------------------------------
\begin{document}

\title{Harmonic maps on domains with piecewise Lipschitz continuous metrics}

\author{Haigang Li\footnote{School of Mathematical Sciences, 
\& Laboratory of Mathematics and Complex Systems, Ministry of
Education, Beijing Normal University, Beijing 100875,
 P. R. China.}, \ \ Changyou Wang
\footnote{Department of Mathematics, University of Kentucky, Lexington, KY 40506, USA.}}
\date{}
\maketitle

\begin{abstract} For a bounded domain $\Omega$ equipped with a piecewise Lipschitz continuous
Riemannian metric $g$, we consider harmonic map from $(\Omega,g)$ to a compact Riemannian manifold $(N,h)\hookrightarrow\mathbb R^k$ without boundary.
We generalize the notion of stationary harmonic map and prove the partial regularity. We also discuss
the global Lipschitz and piecewise $C^{1,\alpha}$-regularity of harmonic maps from $(\Omega,g)$ to manifolds
that support convex distance functions.
\end{abstract}

\section{Introduction}

Throughout this paper, we assume that $\Omega$ is a bounded domain in $\mathbb{R}^{n}$, separated by  a $C^{1,1}$-hypersurface $\Gamma$ into two subdomains $\Omega^+$ and $\Omega^-$, namely, $\Omega=\Omega^+\cup\Omega^-\cup\Gamma$, and
$g$ is a piecewise Lipschitz metric on $\Omega$ that is  $g\in{C}^{0,1}(\Omega^{+})
\cap {C}^{0,1}(\Omega^{-})$ but discontinuous at any $x\in\Gamma$.
For example, $\Omega=B_{1}\subset\mathbb{R}^{n}$ is  the unit ball, $\Gamma=B_1\cap\{x=(x',0)\in\mathbb R^n\}$, and
$$\bar{g}(x)=
\begin{cases}
g_{0} &x\in{B}_{1}^{+}=\{x^{n}>0\}\cap{B}_{1},\\
kg_{0} &x\in{B}_{1}^{-}=\{x^{n}<0\}\cap{B}_{1},
\end{cases}
$$
where $g_{0}=dx^{2}$ is the standard metric on $\mathbb R^n$ and $k\ (\neq{1}) $ is a positive constant.

Let $(N,h)\hookrightarrow\mathbb{R}^{k}$ be a $l$-dimensional, smooth
compact Riemannian manifold without boundary, isometrically embedded in the Euclidean
space $\mathbb R^k$.

Motivated by the recent studies on elliptic systems in domains consisting of composite materials
(see Li-Nirenberg \cite{ln}) and the homogenization theory in calculus of variations (see Avellaneda-Lin
\cite{al} and Lin-Yan \cite{LY}), we are interested in the regularity issue of stationary harmonic maps from $(\Omega,g)$ to $(N,h)$.

In order to describe the problem, let's first recall some notations. Throughout this paper, we
use the Einstein convention for summation.
For the metric $g=g_{ij}\,dx^i\,dx^j$, let $(g^{ij})$ denote the inverse matrix of $(g_{ij})$, and
$dv_g=\sqrt{g}\,dx=\sqrt{\det{(g_{ij})}}\,dx$ denote the volume form of $g$.
For $1<p<+\infty$, define the Sobolev space 
$$W^{1,p}(\Omega,N)=\left\{u:\Omega\rightarrow \mathbb R^k
\ \bigg| ~u(x)\in N \ {\rm{a.e.}}\ x\in\Omega, \
~E_p(u,g)=\int_{\Omega} |\nabla u|_g^{p}\,dv_g<+\infty \right\},$$
where
$$\left|\nabla u\right|_g^2\equiv g^{ij}\langle\frac{\partial{u}}{\partial{x}_{i}},
\frac{\partial{u}}{\partial{x}_{j}}\rangle$$
is the $L^2$-energy density of $u$ with respect to $g$, and
$\langle\cdot,\cdot\rangle$ denotes the inner product in $\mathbb R^k$.
Denote $W^{1,2}(\Omega,N)$ by $H^1(\Omega,N)$.

Now let's recall the concept of stationary harmonic maps.
\begin{definition} A map $u\in{H}^{1}(\Omega,N)$ is called a (weakly) harmonic map,
if it is a critical point of $E_2(\cdot,g)$, i.e., $u$ satisfies
\begin{equation}\label{harmonic}
\Delta_{g}u+A(u)(\nabla{u},\nabla{u})_{g}=0
\end{equation}
in the sense of distributions. Here
$$\Delta_{g}=\frac{1}{\sqrt{g}}\frac{\partial}{\partial{x}_{i}}\left(\sqrt{g}g^{ij}\frac{\partial}{\partial{x}_{j}}\right)$$
is the Laplace-Beltrami operator on $(\Omega, g)$,
$A(\cdot)(\cdot,\cdot)$ is the second fundamental form of $(N,h)\hookrightarrow\mathbb{R}^{k}$, and $$A(u)(\nabla{u},\nabla{u})_{g}
=g^{ij}A(u)\left(\frac{\partial{u}}{\partial{x}_{i}},\frac{\partial{u}}{\partial{x}_{j}}\right).$$
\end{definition}

\begin{definition} A (weakly) harmonic map $u\in H^1(\Omega, N)$
is called a stationary harmonic map,  if, in additions, it is a critical point of
$E_2(\cdot,g)$ with respect to suitable domain variations:
\begin{equation}\label{stationary-harmonic}
\frac{d}{dt}\bigg|_{t=0}\int_{\Omega}\big|\nabla{u}^{t}\big|^{2}_{g}\ dv_{g}=0,
\ \ {\rm{with}}\ \  u^{t}(x)=u(F_t(x)),
\end{equation}
where $\displaystyle F(t,x):=F_t(x)\in C^1([-\delta,\delta], C^1(\Omega,\Omega))$ is a $C^1$ family of differmorphisms
for some small $\delta>0$ satisfying
\begin{equation}\label{suitable}
\begin{cases}
F_0(x)=x  & \forall x\in\Omega,\\
F_t(x)=x & \forall (x,t)\in\partial\Omega\times [-\delta,\delta],\\
F_t\Big(\overline{\Omega^\pm}\Big)\subset\overline{\Omega^\pm} & \forall t\in [-\delta,\delta].
\end{cases}
\end{equation}
\end{definition}
It is readily seen that any minimizing harmonic map from $(\Omega, g)$ to $(N,h)$ is a stationary harmonic
map.  It is also easy to see from Definition 1.2 that a stationary harmonic map on $(\Omega, g)$
is a stationary harmonic map on $(\Omega^\pm,g)$ and hence satisfies an energy monotonicity
inequality on $\Omega^\pm$, since $g\in C^{0,1}(\Omega^\pm)$. We will show in \S2 that a stationary
harmonic map on $(\Omega, g)$ also satisfies an energy monotonicity inequality in $\Omega$
under the condition (\ref{jump_cond}) below.

The first result is concerned with both the (partial) Lipschitz regularity and
(partial) piecewise $C^{1,\alpha}$-regularity of stationary harmonic maps.
In this context, we are able to extend the well-known partial regularity
theorem of stationary harmonic maps on domains with smooth metrics, due
to H\'elein \cite{h},  Evans \cite{e}, Bethuel \cite{b}. More precisely,
we have

\begin{theorem}\label{thm1}
Let $u\in{H}^{1}(\Omega,N)$ be a stationary harmonic map on $(\Omega, g)$. If, in additions,
$g$ satisfies the following
jump condition on $\Gamma$ for $n\ge 3$ \footnote{This condition is needed
for both energy monotonicity inequalities for $u$ in dimensions $n\ge 3$ and the piecewise $C^{1,\alpha}$-regularity of $u$.}\ : for any $x\in \Gamma$, there exists a positive constant
$k(x)\not=1$ such that
\begin{equation}\label{jump_cond}
\lim_{y\in\Omega^+, y\rightarrow x} g(y)
=k(x)\lim_{y\in\Omega^-, y\rightarrow x} g(y),
\end{equation}
then
there exists a closed set $\Sigma\subset\Omega$, with $H^{n-2}(\Sigma)=0$, such that
for some $0<\alpha<1$,
$$(\mathrm{i})~u\in{\mathrm{Lip}}_{\mathrm{loc}}(\Omega\setminus\Sigma,N),
\quad\quad(\mathrm{ii})~u\in{C}_{\mathrm{loc}}^{1,\alpha}((\Omega^{+}\cup\Gamma)\setminus\Sigma,N)
\cap {C}_{\mathrm{loc}}^{1,\alpha}((\Omega^{-}\cup\Gamma)\setminus\Sigma,N).$$
\end{theorem}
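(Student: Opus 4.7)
The plan is to combine the classical H\'elein--Evans--Bethuel partial regularity, which applies inside $\Omega^\pm$ where $g$ is Lipschitz, with a new $\varepsilon$-regularity statement at interface points of $\Gamma$. The engine in both cases is an energy monotonicity inequality, and the jump condition \eqref{jump_cond} is what enables it to hold across $\Gamma$. As previewed in the introduction, I would first establish in \S2 that for $x_0\in\Gamma$ the quantity $r^{2-n}e^{Cr}\int_{B_r(x_0)}|\nabla u|_g^2\,dv_g$ is non-decreasing in $r$: testing the stationarity identity \eqref{stationary-harmonic} with a radially directed domain variation $F_t$ that preserves $\overline{\Omega^\pm}$, the interface contribution along $\Gamma$ is a multiple of $(k(x_0)-1)$ that cancels precisely because of \eqref{jump_cond}. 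With the constant $\varepsilon_0$ from the $\varepsilon$-regularity below, I then set
$$\Sigma=\Big\{x\in\Omega:\liminf_{r\to 0^+}r^{2-n}\int_{B_r(x)}|\nabla u|_g^2\,dv_g\ge\varepsilon_0\Big\},$$
which is closed by the extended monotonicity and satisfies $H^{n-2}(\Sigma)=0$ by a standard Vitali covering argument.

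The main new ingredient is $\varepsilon$-regularity at $x_0\in\Gamma$: if $r^{2-n}\int_{B_r(x_0)}|\nabla u|_g^2\,dv_g<\varepsilon_0$ for some small $r$, then $u\in\mathrm{Lip}(B_{r/2}(x_0))$. The approach is to (i) use the $C^{1,1}$-regularity of $\Gamma$ to flatten it and freeze $g$ from each side, reducing to the model in which $\Gamma=\{x^n=0\}$ and $g^\pm$ are constant matrices with $g^+=k\,g^-$; (ii) derive from the weak form of \eqref{harmonic} the conormal transmission condition
$$\sqrt{g^+}\,g^{+\,in}\partial_i u\big|_{\Gamma^+}=\sqrt{g^-}\,g^{-\,in}\partial_i u\big|_{\Gamma^-},$$
which under \eqref{jump_cond} reduces to a constant linear relation determined by $k(x_0)$; (iii) use this condition to extend $u|_{B_r^-}$ by a $k(x_0)$-weighted reflection to an $H^1$ map on $B_r$ whose div--curl structure matches the $g^+$-side; (iv) run H\'elein's Coulomb-gauge moving frame (or Evans' Hardy-space argument) on the reflected map to get a Morrey-type energy decay, and iterate a Campanato scheme to produce H\"older continuity of $\nabla u$ on each side, hence Lipschitz continuity of $u$ across $\Gamma$. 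The main obstacle is precisely the combination of (iii)--(iv): the Hardy-space cancellation in $A(u)(\nabla u,\nabla u)_g$ is not preserved by a naive odd or even reflection, so the correct extension must be weighted by $k(x_0)$ and compatible with the two-sided Hodge decompositions---which is where \eqref{jump_cond} does its essential work.

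Finally, once $u\in\mathrm{Lip}_{\mathrm{loc}}(\Omega\setminus\Sigma)$, the nonlinearity $A(u)(\nabla u,\nabla u)_g$ is in $L^\infty_{\mathrm{loc}}$, so on each side of $\Gamma$ the map $u$ satisfies a linear divergence-form system with Lipschitz coefficients and bounded right-hand side, subject to the Lipschitz conormal transmission data derived above. The Schauder-type theory for elliptic transmission problems developed by Li--Nirenberg \cite{ln} then yields the piecewise $C^{1,\alpha}$-regularity up to $\Gamma$, which is conclusion (ii) of the theorem; conclusion (i) follows from the Lipschitz estimate of the $\varepsilon$-regularity step.
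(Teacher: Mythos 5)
Your overall architecture (interior monotonicity plus a new monotonicity and $\varepsilon$-regularity at points of $\Gamma$, the usual definition of $\Sigma$, then a bootstrap to piecewise $C^{1,\alpha}$) matches the paper, but your description of the monotonicity mechanism is not quite right: with the radial variation $Y=x\,\eta(|x|)$ centered at $x_0\in\Gamma$ (admissible because it preserves $\overline{\Omega^\pm}$), there is no interface term proportional to $k(x_0)-1$ to cancel. The actual role of \eqref{jump_cond} is that the two one-sided limits of $g$ at $x_0$ are simultaneously conformal to the Euclidean metric after a single linear change of variables, so the term $\overline g^{ij}\langle\partial_i u,\partial_k u\rangle x^kx^j/|x|$ in the stationarity identity equals a positive multiple of $|x|\,|\partial u/\partial r|^2$ on each side and hence has a favorable sign; without the conformal jump this term has no sign and the argument fails. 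This is a fixable imprecision.

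The genuine gap is in your steps (iii)--(iv). You propose to extend $u|_{B_r^-}$ by a $k(x_0)$-weighted reflection and then run H\'elein/Evans/Bethuel on the reflected map, and you yourself flag the compatibility of this reflection with the Hardy-space cancellation as ``the main obstacle'' without resolving it. It is not merely an obstacle: a weighted reflection of an $N$-valued map is in general no longer $N$-valued, so the structure of $A(u)(\nabla u,\nabla u)_g$ and the Coulomb-frame div--curl pairing are destroyed, and no reflection is exhibited (or known) that converts a harmonic map across a conformal metric jump into a harmonic map for a Lipschitz metric. Moreover, the conormal transmission condition you want to impose on $\nabla u$ along $\Gamma$ has no pointwise meaning for an $H^1$ weak solution before regularity is proved, so it cannot serve as input to the $\varepsilon$-regularity step. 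The paper avoids reflection entirely: it proves a Hodge decomposition $F=\nabla G+H$ with $\mathrm{div}_{\bar g}H=0$ and $L^p$ bounds for metrics continuous up to each side of a flat interface (Theorem \ref{thm36}, via the Byun--Wang $W^{1,p}$ theory for coefficients with codimension-one discontinuities), and then runs Bethuel's moving-frame argument with the divergence taken with respect to the piecewise metric, so the div--curl/Hardy pairing survives. Note also that the moving-frame step yields only H\"older continuity of $u$, not Lipschitz; the Lipschitz and piecewise $C^{1,\alpha}$ conclusions require a further comparison against solutions of $\Delta_{\bar g}v=0$ for the frozen piecewise-constant metric and a Campanato iteration for a modified gradient whose normal component is weighted by the jump factor. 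Your final appeal to Li--Nirenberg is the right idea for that last stage, but as written it presupposes the Lipschitz bound you have not yet established.
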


We would like to remark that when the dimension $n=2$, since the energy monotonicity inequality automatically holds for $H^1$-maps,
Theorem \ref{thm1} holds for any weakly harmonic map from domains
of piecewise $C^{0,1}$-metrics, i.e., any weakly harmonic map on domains with the above piecewise Lipschitz continuous metrics is both Lipschitz continuous and piecewise $C^{1,\alpha}$ for some $0<\alpha<1$.

Through the example constructed by Rivi\`ere \cite{R}, we know that weakly harmonic
maps on domains with smooth metrics may not enjoy partial regularity properties in dimensions $n\ge 3$.
Here we consider weakly harmonic maps on domains with piecewise
Lipschitz continuous metrics into any Riemannian manifold $(N,h)$,
on which $d_N^2(\cdot, p)$ is convex. Such Riemannian manifolds $N$
include those with non-positive sectional curvatures, and geodesic
convex balls in any Riemannian manifold. In particular, we 
extend the classical regularity theorems on harmonic maps on domains with
smooth metrics, due to Eells-Sampson \cite{ES} and Hildebrandt-Kaul-Widman \cite{HKW},
and prove

\begin{theorem}\label{thm2} Let
$g$ be the same as in Theorem \ref{thm1}.
Assume that on the universal cover $(\widetilde{N},\widetilde h)$ of
$(N,h)$\footnote{Here the covering map $\Pi:\widetilde N\to N$ is a Riemannian submersion from
($\widetilde {N},\widetilde{h}$) to ($N,h)$.},
 the square of distance function $d^2_{\widetilde N}(\cdot,p)$ is convex for any $p\in\widetilde N$.
If $u\in H^1(\Omega, N)$ is a weakly harmonic map, then for some $0<\alpha<1$,
$$(\mathrm{i})~u\in\mathrm{Lip}_{\rm{loc}}(\Omega,N),
\quad\quad\quad(\mathrm{ii})~u\in{C}^{1,\alpha}_{\rm{loc}}(\Omega^{+}\cup\Gamma,N)
\cap {C}^{1,\alpha}_{\rm{loc}}(\Omega^{-}\cup\Gamma,N) .$$
\end{theorem}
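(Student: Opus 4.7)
The plan is to first establish H\"older continuity of $u$ using the convexity of $d^2_{\widetilde N}$ on the universal cover, then upgrade to piecewise $C^{1,\alpha}$-regularity via linear elliptic theory for piecewise smooth coefficients, and finally deduce the global Lipschitz bound as an immediate consequence. In contrast to Theorem \ref{thm1}, no singular set appears and no smallness-of-energy hypothesis is needed: the convexity of $d^2_{\widetilde N}(\cdot,p)$ supplies an unconditional subharmonic quantity. The jump condition (\ref{jump_cond}) is still used later to secure the piecewise $C^{1,\alpha}$ step.

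I would fix $x_0\in\Omega$ and a ball $B_r(x_0)\subset\subset\Omega$, and lift $u|_{B_r}$ to a weakly harmonic map $\widetilde u:B_r(x_0)\to\widetilde N$; since $\Pi:\widetilde N\to N$ is a local isometry, the harmonic map equation lifts directly. For each $p\in\widetilde N$, set $\phi(x)=d^2_{\widetilde N}(\widetilde u(x),p)$. A direct chain-rule computation using the weak harmonic map equation gives, in each $\Omega^\pm\cap B_r$,
\[
 \Delta_g\phi \;=\; 2\,(\mathrm{Hess}\, d^2_{\widetilde N}(\cdot,p))(\nabla\widetilde u,\nabla\widetilde u)_g \;\geq\; 0,
\]
by convexity of $d^2_{\widetilde N}(\cdot,p)$. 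To promote this to a distributional inequality on all of $B_r$, I would test the weak equation for $\widetilde u$ against $D\phi(\widetilde u)\,\eta$ with $\eta\ge 0$, integrate by parts separately on $\Omega^\pm\cap B_r$, and verify that the interface terms on $\Gamma\cap B_r$ cancel thanks to the conormal-matching of $\widetilde u$ encoded in the weak formulation on $(\Omega,g)$. Since $\sqrt g\,g^{ij}$ is bounded and uniformly elliptic on $B_r$, De Giorgi--Nash--Moser applies to $\Delta_g$ and yields a sub-mean-value inequality $\sup_{B_{r/2}}\phi\le C|B_r|^{-1}\int_{B_r}\phi\,dv_g$. Choosing $p$ to be a (projected) mean value of $\widetilde u$ on $B_r$ and iterating then gives oscillation decay $\mathrm{osc}_{B_{r/2}}u\le\theta\,\mathrm{osc}_{B_r}u$ with some $\theta\in(0,1)$, hence local H\"older continuity of $u$.

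Once $u$ is H\"older continuous, on small balls $u(B_r)$ lies in a single coordinate chart of $N$, where the harmonic map equation reads
\[
 -\partial_i(\sqrt g\,g^{ij}\partial_j u^\alpha)\;=\;\sqrt g\,g^{ij}\Gamma^\alpha_{\beta\gamma}(u)\,\partial_i u^\beta\,\partial_j u^\gamma,
\]
a semilinear system with piecewise Lipschitz principal coefficients and a gradient-quadratic source. Combining the Li--Nirenberg theory \cite{ln} for elliptic systems with piecewise H\"older coefficients (which is where (\ref{jump_cond}) enters) with a standard H\"older bootstrap on the right-hand side yields $u\in C^{1,\alpha}_{\rm loc}(\Omega^+\cup\Gamma,N)\cap C^{1,\alpha}_{\rm loc}(\Omega^-\cup\Gamma,N)$, which is statement (ii). Statement (i) then follows because $|\nabla u|$ is locally bounded on each side of $\Gamma$ up to the interface and $u\in H^1$ has no trace jump, so $u$ is globally Lipschitz.

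The main obstacle I anticipate is the global verification of $\Delta_g\phi\ge 0$ across $\Gamma$: on each $\Omega^\pm$ it reduces to the standard non-positive-curvature computation, but across $\Gamma$ one must rule out the appearance of a singular source by carefully exploiting the transmission condition built into the weak formulation of the harmonic map equation on $(\Omega,g)$. A secondary subtlety is that, since $u$ need not be continuous a priori, the initial local lifting to $\widetilde N$ must be set up carefully, e.g.\ by first running the subharmonic comparison intrinsically with $d_N^2(\cdot,p)$ inside a small convex geodesic ball of $N$ itself, and passing to the full lift on $\widetilde N$ only once enough continuity is available.
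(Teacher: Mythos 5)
Your architecture is the same as the paper's (subharmonicity of $d^2_{\widetilde N}(\widetilde u,\cdot)$ from convexity, interior H\"older continuity, then linear transmission-type theory for the upgrade), but the central step is not closed. The passage from $\Delta_g\phi_p\ge 0$, $\phi_p=d^2_{\widetilde N}(\widetilde u,p)$, to oscillation decay cannot be done by the sub-mean-value inequality with $p$ a mean value of $\widetilde u$: that inequality reads $\sup_{B_{r/2}}\phi_p\le C\fint_{B_r}\phi_p$ with a structural constant $C>1$, and if $u$ essentially takes two values each on half the measure of $B_r$ then $\fint_{B_r}\phi_p\gtrsim(\mathrm{osc}_{B_r}u)^2$ for \emph{every} choice of $p$, so all you recover is $\mathrm{osc}_{B_{r/2}}u\le C'\,\mathrm{osc}_{B_r}u$ with $C'>1$ --- no decay, no iteration. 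This is precisely why the paper's Theorem 6.1 (following Lin \cite{l}, in the spirit of Evans \cite{evans1} and Caffarelli \cite{c}) runs a finite covering argument instead: cover $u(B_1)$ by $m$ balls of radius $\epsilon\,\mathcal C_1$, select a center $p_{i_0}$ for which simultaneously $\sup_{B_1}d_N^2(u,p_{i_0})\ge\frac14\mathcal C_1^2$ and $H^n\bigl(u^{-1}(B^N(p_{i_0},\frac1{16}\mathcal C_1))\cap B_1\bigr)\ge c_0$, and apply the \emph{weak Harnack inequality} to the nonnegative \emph{supersolution} $f=\sup_{B_1}d_N^2(u,p_{i_0})-d_N^2(u,p_{i_0})$; the measure lower bound feeds the mean of $f$ and forces $u(B_{1/2})$ to miss one of the $m$ balls. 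Iterating at most $m$ times halves $\mathrm{diam}_N(u(B_{\delta_0 r}))$. Some device of this kind --- measure-theoretic leverage combined with weak Harnack for a supersolution, rather than the sub-mean-value inequality for a subsolution --- is the missing idea; without it your H\"older step fails.

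Two further points on the upgrade, which is otherwise in the right spirit. First, the semilinear system has a quadratic gradient right-hand side, which is critical rather than subcritical: before any bootstrap one must convert $u\in C^{\alpha_0}$ into the Morrey decay $r^{2-n}\int_{B_r}|\nabla u|^2\le Cr^{2\alpha}$ with $\alpha$ close to $1$, which the paper obtains by comparing with $\Delta_g v=0$ and using $\int_{B_r}|\nabla(u-v)|^2\lesssim\int_{B_r}|\nabla u|^2|u-v|\lesssim r^{n-2+3\alpha_0}$ (Theorem 5.1, Step 1). Second, across $\Gamma$ the paper does not apply a black-box $C^{1,\alpha}$ transmission theorem to the nonlinear system; it freezes $g$ to a piecewise constant metric (this is where (\ref{jump_cond}) enters) and runs a Campanato iteration on the modified gradient $\widetilde D u$ whose normal component carries the weight $1+(k^{n/2}-1)\chi_{B_1^-}$, precisely because $\partial u/\partial x_n$ itself jumps across $\Gamma$ and the raw quantity $\fint_{B_s}|\nabla u-(\nabla u)_s|^2$ does not decay there. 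Your citation of Li--Nirenberg \cite{ln} is consistent with this (the paper uses it for the comparison function), but the Campanato quantity must be adapted to the jump for the argument to close.
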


The idea to prove Theorem 1.1 is motivated by Evans \cite{e}
and Bethuel \cite{b}. However, there are several new
difficulties that we have to overcome. The first difficulty is to establish an almost energy monotonicity
inequality for stationary harmonic maps in $\Omega$, which is achieved by
observing that an exact monotonicity inequality holds at any $x\in\Gamma$, see \S2 below.
The second one is to establish a Hodge decomposition in $L^p(B,\mathbb R^n)$, for any $1<p<+\infty$, on a ball $B  (=B_r(0))$ equipped with
certain piecewise continuous metrics $g$, 
in order to adapt the argument by Bethuel \cite{b}.  More precisely, we
will show that the following elliptic equation on $B$:
$$\begin{cases}\frac{\partial}{\partial x_i}(a_{ij}\frac{\partial v}{\partial x_j})
=\hbox{div}(f) & \ {\rm{in}}\ B,\\
v=0 & \ {\rm{on}}\ \partial B
\end{cases}
$$
enjoys the $W^{1,p}$-estimate: for any $1<p<+\infty$,
$$\Big\|\nabla v\Big\|_{L^p(B)}\leq C \Big\|f\Big\|_{L^p(B)}$$
provided that $\displaystyle (a_{ij})\in C\left(\overline{B^\pm}\right) \cap C\left(B^\delta\right)$ for some $\delta>0$ is uniformly elliptic, and is discontinuous
on $\partial B^+\setminus B^\delta$, where
$\displaystyle B^\delta=\Big\{x\in B: {\rm{dist}}(x,\partial B)\le\delta\Big\}$.

This fact follows from a recent Theorem by Byun-Wang \cite{bw}, see \S3 below.
The third one is to employ the moving frame method to establish a decay estimate in suitable Morrey spaces under a smallness condition, which
is similar to \cite{iw}. To obtain Lipschitz and piecewise $C^{1,\alpha}$-regularity, 
we compare the harmonic map system with an elliptic system with piecewise
constant coefficients and extend the hole-filling argument by Giaquinta-Hildebrandt
\cite{GH}.

The paper is organized as follows. In \S 2, we derive an almost energy
monotonicity inequality. In \S 3,
we show the global $W^{1,p}$ $(1<p<\infty)$ estimate for
elliptic systems with certain piecewise continuous coefficients, and 
a Hodge decomposition theorem.
In \S 4, we adapt the moving frame method, due to H\'{e}lein \cite{h}
and Bethuel \cite{b}, to establish an $\epsilon$-H\"older continuity.
In \S 5, we establish both Lipschitz and piecewise $C^{1,\alpha}$ regularity for
H\"older continuous harmonic maps. In \S 6, we consider harmonic maps into manifolds supporting
convex distance functions and prove Theorem
\ref{thm2}.

\medskip
\noindent{\bf Acknowledgement}. Part of this work was completed while the first author visited University of
Kentucky. He would like to thank the Department of Mathematics for its hospitality. 
The first author was partially supported by SRFDPHE (20100003120005)
and NNSF in China (11071020) and Program for Changjiang Scholars and
Innovative Research Team in University in China.
The second author is partially supported by NSF grant 1000115.

\section{Energy monotonicity inequality}

This section is devoted to the derivation of energy monotonicity inequalities for stationary harmonic maps
from $(\Omega,g)$ to $(N,h)$. More precisely, we have
\begin{theorem} \label{thm21} Under the same assumption as in Theorem \ref{thm1},
there exist $C>0$ and $r_0>0$ depending only on $\Gamma$ and  $g$ such that
if $u\in W^{1,2}(\Omega, N)$ is a stationary harmonic map on $(\Omega,g)$,
then for any $x_0\in \Omega$, there holds
\begin{equation}\label{monotonicity_ineq}
s^{2-n}\int_{B_s(x_0)}\big|\nabla u\big|_g^2\,dv_g
\le e^{Cr} r^{2-n}\int_{B_r(x_0)}\big|\nabla u\big|_g^2\,dv_g
\end{equation}
for all $\displaystyle 0<s\le r\le \min\{r_0, {\rm{dist}}(x_0,\partial\Omega)\}$.
\end{theorem}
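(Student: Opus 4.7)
The plan is to apply the stationarity condition \eqref{stationary-harmonic} to radial-type vector fields $Y_{x_0,r}$ adapted to the position of $x_0$, then convert the resulting integral identity into a differential inequality of the form $\frac{d}{d\rho}\bigl(e^{C\rho}\rho^{2-n}E(\rho)\bigr)\ge 0$, where $E(\rho):=\int_{B_\rho(x_0)}|\nabla u|_g^2\,dv_g$. Integrating from $s$ to $r$ then delivers \eqref{monotonicity_ineq}. The admissibility constraint \eqref{suitable} forces $Y_{x_0,r}$ to be tangent to $\Gamma$, and the jump condition \eqref{jump_cond} is used precisely so that the discontinuity of $g$ across $\Gamma$ contributes no singular term to this identity.

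The main case is $x_0\in\Gamma$. Using the $C^{1,1}$ regularity of $\Gamma$, pick a local diffeomorphism $\Phi$ with $\Phi(x_0)=0$, $\Phi(\Gamma)\subset\{y_n=0\}$ and $\Phi(\Omega^\pm)\subset\{\pm y_n>0\}$, and set $\tilde g:=(\Phi^{-1})^*g$, which is piecewise Lipschitz with $\tilde g^+(0)=k(x_0)\tilde g^-(0)$. Choose $Y(y)=\phi(|y|/r)y$ in flattened coordinates; since $Y^n$ vanishes on $\{y_n=0\}$, its flow is admissible after pushing forward through $\Phi^{-1}$. Differentiating the energy in $\tilde g_t:=(F_t^{-1})^*\tilde g$ at $t=0$ via the Lie-derivative calculus yields
$$\int\Big[(\mathcal{L}_Y\tilde g)(\nabla u,\nabla u)_{\tilde g}-|\nabla u|_{\tilde g}^2\,\mathrm{div}_{\tilde g}Y\Big]\sqrt{\tilde g}\,dy=0.$$
The term $Y^m\partial_m\tilde g_{kl}$ inside $\mathcal{L}_Y\tilde g$ carries a distributional contribution $(\tilde g^+-\tilde g^-)\delta_{\{y_n=0\}}$ when $m=n$, but this is killed by $Y^n|_{\{y_n=0\}}\equiv 0$, leaving only bounded, a.e.-defined integrands on each side of $\Gamma$. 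The Taylor expansion $\tilde g^\pm(y)=\tilde g^\pm(0)+O(|y|)$ together with the conformal relation $\tilde g^+(0)=k(x_0)\tilde g^-(0)$ reduces the leading order to the classical Euclidean radial identity (the two sides combining symmetrically up to the factor $k(x_0)$), while the piecewise Lipschitz errors produce an $O(\rho)\cdot\rho^{2-n}E(\rho)$ term that gives the exponential factor.

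For $x_0\in\Omega^\pm$ with $r\le d:=\mathrm{dist}(x_0,\Gamma)$, the ball $B_r(x_0)$ is compactly contained in $\Omega^\pm$ where $g$ is Lipschitz, and the classical Schoen--Struwe proof with $Y=\phi(|x-x_0|/r)(x-x_0)$ applies verbatim. For $r>d$, chain this interior estimate on the sub-interval $[0,d]$ with the $\Gamma$-monotonicity at the nearest point $x_0^*\in\Gamma$, using $B_{\rho-d}(x_0^*)\subset B_\rho(x_0)\subset B_{\rho+d}(x_0^*)$ to transfer the estimate on $[d,r]$; the ratio factors $(\rho\pm d)/\rho$ stay uniformly bounded for $d\le\rho\le r_0$ and are absorbed by enlarging $C$. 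The main obstacle I expect is the bookkeeping in the previous paragraph: verifying that tangency of $Y$ to $\Gamma$ exactly annihilates the $\delta$-contribution of $\partial_n\tilde g$, and that \eqref{jump_cond} provides precisely the two-sided compatibility needed for the Euclidean radial cancellation to survive the jump with only a Lipschitz-metric error of order $\rho$.
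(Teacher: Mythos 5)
Your proposal is correct and follows essentially the same route as the paper: a stationarity identity tested against radial vector fields tangent to the flattened interface (so the discontinuity of $g$ contributes no singular term), the decomposition of $g$ into a piecewise-constant leading part --- which by the jump condition \eqref{jump_cond} is conformal to $g_0$ on both sides, giving the sign of the radial term --- plus an $O(|x|)$ Lipschitz error that produces the $e^{Cr}$ factor, and finally a ball-inclusion/chaining argument to pass from interface points to nearby interior points. The only cosmetic difference is your Lie-derivative formulation of the first variation versus the paper's direct change-of-variables computation, and your overview sentence slightly misattributes the absence of the $\delta_{\Gamma}$-term to \eqref{jump_cond} rather than to the tangency of $Y$, which your detailed paragraph then states correctly.
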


Since the metric $g\in C^{0,1}(\Omega^\pm)$,  it is well-known that there are $K>0$ and $r_0>0$ such that (\ref{monotonicity_ineq})
holds for any $x_0\in\Omega^\pm$ and $\displaystyle 0<s\le r\le \min\{r_0, {\rm{dist}}(x_0,\partial\Omega^\pm)\}$, see \cite{h}. In particular, (\ref{monotonicity_ineq}) holds for any $\displaystyle x_0\in\Omega\setminus \Gamma^{r_0}$ and $\displaystyle 0<s\le r\le\min\{r_0, {\rm{dist}}(x_0,\partial\Omega)\}$, where $\displaystyle\Gamma^{r_0}=\{x\in\Omega: \ {\rm{dist}}(x,\Gamma)\le r_0\}$
is the $r_0$-neighborhood of $\Gamma$.
We will see that to show (\ref{monotonicity_ineq}) for $x_0\in\Gamma^{r_0}$, it suffices
to consider the case $x_0\in\Gamma$.

It follows from the assumption on $\Gamma$ and $g$, there exists $r_0>0$ such that for
any $x_0\in\Gamma$ there exists a $C^{1,1}$-differmorphism $\Phi_{0}:B_1\to B_{r_1}(x_0)$,
where $\displaystyle r_1=\min\{r_0, {\rm{dist}}(x_0,\partial\Omega)\}$, such that
$$\begin{cases}
\Phi_{0}(B_1^\pm)=\Omega^\pm\cap B_{r_1}(x_0)\\
\Phi_0(\Gamma_1)=\Gamma\cap B_{r_1}(x_0), \ \mbox{where}\ \Gamma_1 =\{x\in B_1: x_n=0\}.
\end{cases}
$$
Define $\displaystyle\widetilde u(x)=u(\Phi_0(x))$ and
$\displaystyle\widetilde g(x)=(\Phi_0)_*(g)(x)$ for $ x\in B_1$.  Then it is
readily seen that\\
(i) $\widetilde g$ is piecewise $C^{0,1}$, with
the discontinuous set $\Gamma_1$, and satisfies (\ref{jump_cond}) on $\Gamma_1$
\footnote{In fact, since
$\displaystyle (\Phi_0)_*(g)_{ij}(x)
=g_{kl}(\Phi_0(x))\frac{\partial\Phi_0^k}{\partial x_i}(x)\frac{\partial\Phi_0^l}{\partial x_j}(x),$
(\ref{jump_cond}) implies that for any $x\in\Gamma_1$
$$\lim_{y\in\Omega^+,y\rightarrow x}(\Phi_0)_*g(y)
=k(\Phi_0(x))\lim_{y\in\Omega^-,y\rightarrow x}(\Phi_0)_*g(y).$$}, \\
(ii)  If $\displaystyle u: (B_{r_1}(x_0),\ g)\to (N,\ h)$
is a stationary harmonic map, so does
$\displaystyle\widetilde u: (B_1,\ \widetilde g)\to (N,\ h)$.

Thus we may assume that $\Omega=B_1$, $g$ is a piecewise $C^{0,1}$-metric which satisfies
(\ref{jump_cond}) on the set of discontinuity $\Gamma_1$, and $u: (B_1,g)\to (N,h)$
is a stationary harmonic map. It suffices to establish (\ref{monotonicity_ineq})
in $B_{\frac12}$. We first derive a stationarity identity for $u$.
\begin{prop}
Let $u\in W^{1,2}(B_{1},N)$ be a stationary harmonic map on $(B_1, g)$. Then
\begin{equation}\label{prop21.0}
\int_{B_{1}}\left(2g^{ij}\langle\frac{\partial{u}}{\partial{x}_{k}},\frac{\partial{u}}{\partial{x}_{j}}\rangle Y^k_i-|\nabla{u}|^{2}_{g}\mathrm{div}Y\right)\sqrt{g}\,dx
=\int_{B_1}\frac{\partial}{\partial x_k}\Big(\sqrt{g}g^{ij}\Big) Y^k \langle\frac{\partial{u}}{\partial{x}_i},\frac{\partial{u}}{\partial{x}_{j}}\rangle\,dx
\end{equation}
holds for all $\displaystyle Y=(Y^1,\cdots, Y^{n-1}, Y^n)\in C_{0}^{1}(B_1,\mathbb{R}^{n})$ satisfying
\begin{equation}\label{specific}
Y^n(x)\begin{cases} \ge 0& \mbox{for}\ x^n> 0\\
 =0& \mbox{for}\ x^n=0\\
\le 0& \mbox{for}\ x^n<0,
\end{cases}
\end{equation}
where $\displaystyle Y^k_i=\frac{\partial Y^k}{\partial x_i}$ and
$\displaystyle\mathrm{div} \ Y=\sum_{i=1}^n \frac{\partial Y^i}{\partial x_i}$.
\end{prop}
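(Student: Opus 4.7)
The plan is to realize $Y$ as the infinitesimal generator of an admissible domain variation in the sense of Definition 1.2, invoke stationarity of $u$, and carry out the first-variation computation separately on each side of the interface $\Gamma_1=\{x^n=0\}$.

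\textbf{Step 1 (admissibility of the flow).} Given $Y\in C_0^1(B_1,\mathbb{R}^n)$ satisfying (\ref{specific}), let $F_t:B_1\to B_1$ be the $C^1$ flow generated by $Y$, i.e.\ $\partial_t F_t(x)=Y(F_t(x))$ with $F_0(x)=x$. Since $Y$ has compact support in $B_1$, the flow is defined on some interval $(-\delta,\delta)$ and equals the identity on $\partial B_1$. The condition (\ref{specific}) forces $Y^n\equiv 0$ on $\Gamma_1$, so $Y$ is tangent to $\Gamma_1$; by uniqueness for the flow ODE, $\Gamma_1$ is invariant and trajectories starting in $B_1^\pm$ cannot cross it. Hence $F_t(\overline{B_1^\pm})\subset\overline{B_1^\pm}$ for every $t\in(-\delta,\delta)$, so $\{F_t\}$ is an admissible variation in the sense of (\ref{suitable}).

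\textbf{Step 2 (splitting and first variation).} Setting $u^t:=u\circ F_t$, stationarity of $u$ yields
\begin{equation*}
0=\frac{d}{dt}\bigg|_{t=0}\int_{B_1}|\nabla u^t|^2_g\,dv_g=\frac{d}{dt}\bigg|_{t=0}\int_{B_1^+}|\nabla u^t|^2_g\,dv_g+\frac{d}{dt}\bigg|_{t=0}\int_{B_1^-}|\nabla u^t|^2_g\,dv_g.
\end{equation*}
On each open region $B_1^\pm$ the metric is Lipschitz and $F_t$ is a self-diffeomorphism, so the classical Noether-type computation applies. Changing variables $y=F_t(x)$ and expanding to first order in $t$ using $\partial_t F_t|_{t=0}=Y$, $\partial_t(DF_t)|_{t=0}=DY$, and $\partial_t\det(DF_t)|_{t=0}=\mathrm{div}Y$, one obtains on each $B_1^\pm$ the restriction to that region of the identity (\ref{prop21.0}). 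Summing the two contributions produces (\ref{prop21.0}) itself.

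\textbf{Handling the interface and main obstacle.} Although $\partial_k(\sqrt{g}g^{ij})$ acquires a Dirac-type part concentrated on $\Gamma_1$ coming from the jump of $g$, this singular contribution is paired with $Y^n$, which vanishes on $\Gamma_1$; hence the right-hand side of (\ref{prop21.0}) may be computed either as the a.e.\ pointwise derivative on $B_1^+\cup B_1^-$ or distributionally, with no extra interface term appearing. The main obstacle is Step 1: one must ensure the flow preserves the splitting $\overline{B_1^+}\cup\overline{B_1^-}$ for both $t>0$ and $t<0$, so that the two-sided derivative $E'(0)$ actually vanishes rather than yielding a mere inequality from a one-sided deformation. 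The tangency $Y^n|_{\Gamma_1}=0$ encoded in (\ref{specific}) is precisely what makes $\Gamma_1$ invariant under the flow and thus $\{F_t\}$ a valid two-sided admissible variation.
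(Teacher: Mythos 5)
Your proposal is correct and follows essentially the same route as the paper: realize $Y$ as the generator of an admissible two-sided domain variation satisfying (\ref{suitable}) (the paper takes $F_t(x)=x+tY(x)$ rather than the ODE flow of $Y$, but either works since $Y^n|_{\Gamma_1}=0$ keeps $\overline{B_1^\pm}$ invariant for small $|t|$), split the energy over $B_1^+$ and $B_1^-$, and run the classical first-variation computation on each piece where $g$ is Lipschitz. Your closing remark that the right-hand side is the integral of the a.e.\ pointwise derivative of $\sqrt{g}\,g^{ij}$ on $B_1^+\cup B_1^-$, with no interface contribution because it is paired with $Y^n$ vanishing on $\Gamma_1$, matches how the paper's computation produces that term.
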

\begin{proof} Let $Y$ satisfy (\ref{specific}), it is easy to see that there exists $\delta>0$ such that
$F_t(x)=x+tY(x),\ t\in [-\delta,\delta]$, is a family of differmorphisms from $B_1$ to $B_1$ satisfying
the condition (1.3). Hence
$$0=\frac{d}{dt}\Big|_{t=0}\int_{B_1}|\nabla (u(F_t(x))|_g^2\,dv_g
=\frac{d}{dt}\Big|_{t=0}\Big(\int_{B_1^+}|\nabla (u(F_t(x))|_g^2\,dv_g
+\int_{B_1^-}|\nabla (u(F_t(x))|_g^2\,dv_g\Big).
$$
For $t\in [-\delta, \delta]$, set $G_t=F_t^{-1}$. Direct calculations yield
\begin{eqnarray*}
&&\frac{d}{dt}\Big|_{t=0}\int_{B_1^\pm}|\nabla (u(F_t(x))|_g^2\,dv_g\\
&=&\frac{d}{dt}\Big|_{t=0}\int_{B_1^\pm}\sqrt{g(x)}g^{ij}(x)\langle\frac{\partial u}{\partial y_k},
\frac{\partial u}{\partial y_l}\rangle(x+tY(x))
(\delta_{ki}+tY^k_i)(\delta_{lj}+tY^l_j)\,dx\\
&=&\int_{B_1^\pm}\sqrt{g} g^{ij}\langle\frac{\partial u}{\partial x_k},
\frac{\partial u}{\partial x_l}\rangle(\delta_{ki}Y^l_j+\delta_{lj} Y^k_i)\,dx\\
&&+\int_{B_1^\pm}\frac{d}{dt}\Big|_{t=0}\left(g^{ij}(G_t(x))\sqrt{g(G_t(x))}JG_t(x)\right)
\langle\frac{\partial u}{\partial x_i},
\frac{\partial u}{\partial x_j}\rangle\,dx\\
&=&\int_{B_1^\pm} \Big(2g^{ij}\langle\frac{\partial u}{\partial x_i},
\frac{\partial u}{\partial x_l} \rangle Y^l_j-g^{ij}\langle\frac{\partial u}{\partial x_i},
\frac{\partial u}{\partial x_j}\rangle{\rm{div}} Y\Big)\sqrt{g}\,dx\\
&&-\int_{B_1^\pm}\frac{\partial}{\partial x_k}\Big(\sqrt{g}g^{ij}\Big) Y^k \langle\frac{\partial{u}}{\partial{x}_i},\frac{\partial{u}}{\partial{x}_{j}}\rangle\,dx,
\end{eqnarray*}
where we have used
$$\begin{cases}\frac{d}{dt}\Big|_{t=0}JG_t(x)=-{\rm{div}} Y,\\
\frac{d}{dt}\Big|_{t=0} G_t(x)=-Y(x),\\
\frac{d}{dt}\Big|_{t=0}\Big(g^{ij}(G_t(x))\sqrt{g(G_t(x))}\Big)
=-\frac{\partial}{\partial x_k}\Big(\sqrt{g}g^{ij}\Big) Y^k.
\end{cases} 
$$
This completes the proof.
\end{proof}

\begin{prop}\label{cor1}
Let $u\in{W}^{1,2}(B_{1},N)$ be a stationary harmonic map on $(B_1,g)$. Then
there exists $C>0$ such that
\begin{itemize}
\item[(i)] for any $x^{0}=(x_{0}^{'},x_{0}^{n})\in{B}_{\frac12}\setminus\Gamma_1$, there exists $0<R_{0}\leq\min\{\frac{1}{4},|x_{0}^{n}|\}$, such that
    \begin{equation}\label{cor1.01}
    r^{2-n}\int_{B_{r}(x^{0})}|\nabla{u}|_{g}^{2}dv_{g}\leq\, e^{CR} R^{2-n}\int_{B_{R}(x^{0})}|\nabla{u}|_{g}^{2}dv_{g},
    \quad\quad 0<r\leq{R}<R_{0}.
    \end{equation}
\item[(ii)] for any $x^{0}\in{B}_{\frac{1}{2}}\cap\Gamma_1$, there holds
    \begin{equation}\label{cor1.02}
    r^{2-n}\int_{B_{r}(x^{0})}|\nabla{u}|_{g}^{2}dv_{g}\leq\, e^{CR} R^{2-n}\int_{B_{R}(x^{0})}|\nabla{u}|_{g}^{2}dv_{g},
    \quad\quad 0<r\leq{R}\le\frac{1}{4}.
    \end{equation}
\end{itemize}
In particular, for any $x^{0}\in{B}_{\frac{1}{2}}$, there holds
\begin{equation}\label{cor2.0}
    r^{2-n}\int_{B_{r}(x^{0})}|\nabla{u}|_{g}^{2}dv_{g}\leq\, e^{CR} R^{2-n}\int_{B_{R}(x^{0})}|\nabla{u}|_{g}^{2}dv_{g},
\ \ \ 0<r\leq{R}\le\frac14.
\end{equation}
\end{prop}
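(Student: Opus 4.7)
The plan is to prove (i) and (ii) separately and then combine them to get (iii). Part (i) is essentially the classical monotonicity for smooth metrics, applied on one side of $\Gamma_1$; part (ii) is the new content, requiring the stationarity identity of Proposition 2.2 with a test vector field tailored to the constraint (2.5).

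For (i), let $x^0 \in B_{1/2}\setminus\Gamma_1$ and set $R_0 := \min\{1/4, |x_0^n|\}$. For $R < R_0$, the ball $B_R(x^0)$ lies strictly inside $B_1^+$ or $B_1^-$, where $g \in C^{0,1}$. Restricted there, $u$ is stationary with respect to arbitrary smooth compactly supported variations, so the classical energy monotonicity formula for stationary harmonic maps on a Lipschitz Riemannian domain (H\'elein \cite{h}) yields (2.6) directly.

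For (ii), fix $x^0 \in B_{1/2}\cap\Gamma_1$, so $x_0^n = 0$, and consider the Euclidean radial cutoff $Y(x) = \phi(|x-x^0|)(x-x^0)$ with $\phi \in C^1_0([0,R])$ nonnegative. The crucial observation is that $Y^n(x) = \phi(|x-x^0|)\,x^n$ satisfies the admissibility condition (2.5) precisely because $x_0^n = 0$, so Proposition 2.2 applies. Computing
$$Y^k_i = \phi\,\delta_{ki} + \frac{\phi'(|x-x^0|)}{|x-x^0|}(x-x^0)^i(x-x^0)^k, \qquad \mathrm{div}\,Y = n\phi + \phi'(|x-x^0|)|x-x^0|,$$
substituting into (2.4), and letting $\phi$ approximate $\chi_{[0,R]}$ distributionally, the LHS reduces, with $E(R) := \int_{B_R(x^0)}|\nabla u|_g^2\,dv_g$, to
$$(2-n)E(R) + RE'(R) - \frac{2}{R}\int_{\partial B_R(x^0)}g^{ij}(x-x^0)^i(x-x^0)^k\langle \partial_k u, \partial_j u\rangle\sqrt{g}\,d\sigma,$$
while the RHS is bounded in absolute value by $CR\,E(R)$, using that $\partial_k(\sqrt{g}\,g^{ij}) \in L^\infty(B_1^+)\cap L^\infty(B_1^-)$.

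The principal obstacle is a lower bound on the radial boundary integral, since its integrand $g^{ij}(x-x^0)^i(x-x^0)^k\langle\partial_k u,\partial_j u\rangle$ is not pointwise nonnegative for a general non-identity metric. I propose to resolve this by approximating $g$ to leading order by the piecewise constant metric $g_0$ with $g_0(x) = g^\pm(x^0) := \lim_{y\to x^0,\,y\in B_1^\pm} g(y)$ on $B_1^\pm$. The jump condition (1.5) forces $g^+(x^0)$ and $g^-(x^0)$ to be proportional and hence simultaneously diagonalizable, so after a linear change of variables the leading contribution on each side becomes a positive multiple of $|(x-x^0)\cdot\nabla u|^2 \geq 0$, while the Lipschitz regularity on each side gives $g^{-1}(x) = g_0^{-1} + O(R)$ and hence a remainder of size $O(R^2)E'(R)$ in the radial integral. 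Combining these estimates with the RHS bound produces the differential inequality $\frac{d}{dR}(R^{2-n}E(R)) \geq -C\,R^{2-n}E(R)$, and integration from $s$ to $R \leq 1/4$ yields (2.7).

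For (iii), combine (i) and (ii) via case analysis on $d := |x_0^n| = \mathrm{dist}(x^0,\Gamma_1)$. If $x^0 \in \Gamma_1$, (ii) gives the estimate; otherwise, for $r \leq R \leq d$, (i) applies directly. For $R > d$, first apply (i) to compare $r^{2-n}\int_{B_r(x^0)}|\nabla u|_g^2\,dv_g$ with $d^{2-n}\int_{B_d(x^0)}|\nabla u|_g^2\,dv_g$; then use $B_d(x^0) \subset B_{2d}(y^0)$ where $y^0 := (x_0',0) \in \Gamma_1\cap B_{1/2}$, and invoke (ii) at $y^0$ with radii $2d$ and $R-d$, using the inclusion $B_{R-d}(y^0) \subset B_R(x^0)$. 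Since $R \leq 1/4$, the finite multiplicative constants produced by this chain can be absorbed into $e^{CR}$ by enlarging $C$.
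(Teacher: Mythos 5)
Your proposal follows essentially the same route as the paper's own proof: part (i) by restricting to one--sided variations and invoking the classical monotonicity on $B_1^\pm$; part (ii) by inserting the radial field $x\,\eta(|x|)$ (admissible in \eqref{specific} precisely because $x_0^n=0$) into the stationarity identity \eqref{prop21.0} and freezing the metric to its piecewise--constant one--sided limits, the jump condition \eqref{jump_cond} being exactly what makes the frozen radial boundary term nonnegative, with the Lipschitz deviation $|g-\bar g|\le C|x|$ absorbed as an error; and part (iii) by comparing off--interface balls with concentric balls centered at $(x_0',0)\in\Gamma_1$. The only caveats are ones the paper's proof shares: the intermediate--scale comparisons in (iii) (e.g.\ when $r$ is comparable to $R$, or when $R<3d$) inevitably produce a fixed multiplicative constant such as $4^{n-2}$ that cannot literally be absorbed into $e^{CR}$ as $R\to 0$, so \eqref{cor2.0} really holds with an extra constant in front, and your case analysis omits a couple of routine sub--cases ($d<r\le R$, $d<R<3d$) that are settled by the same trivial ball inclusions.
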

\begin{proof}

(i) By choosing $Y\in{C}^{\infty}_{c}(B_{1}^{+},\mathbb{R}^{n})$ or
$Y\in{C}^{\infty}_{c}(B_{1}^{-},\mathbb{R}^{n})$, we have
that $u$ is a stationary harmonic map on $(\,B_{1}^{+},g)$ and
$(\,B_{1}^{-}, g)$. Thus the monotonicity inequality (\ref{cor1.01}) 
is standard, see \cite{h}.

(ii) For simplicity, consider $x^{0}=(0',0)$. For $\epsilon>0$ and $0<r\le \frac12$, let
$Y_\epsilon(x)=x\eta_\epsilon(x)$, where $\eta_\epsilon(x)=\eta_\epsilon(|x|)\in C_0^\infty(B_1)$ 
satisfies
$$0\le\eta_\epsilon\le 1; \ \eta_\epsilon(s)\equiv 1\ {\rm{for}}\ 0\le s\le {r-\epsilon};
\ \eta_\epsilon(s)\equiv 0 \ {\rm{for}}\ s\ge r; \eta_\epsilon'\le 0;
\ |\eta_\epsilon'|\le \frac{2}{\epsilon}.$$
Then
\begin{equation}\label{deform_vector}
(Y_\epsilon)^j_i=\delta_{ij}\eta_\epsilon(|x|)+\eta_\epsilon'(|x|)\frac{x^{i}x^{j}}{|x|}.
\end{equation}
Substituting $Y_\epsilon$ into the right hand side of (\ref{prop21.0}), and using
$$\Big|\frac{\partial}{\partial x_k}\Big(\sqrt{g}g^{ij}\Big)\Big|
\le C,$$
we have
\begin{equation}\label{rhs}
\Big|\int_{B_1}\frac{\partial}{\partial x_k}\Big(\sqrt{g}g^{ij}\Big) Y_\epsilon^k \langle\frac{\partial{u}}{\partial{x}_i},\frac{\partial{u}}{\partial{x}_{j}}\rangle\,dx\Big|
\le Cr\int_{B_r}|\nabla u|^2\,dx\le Cr\int_{B_r}|\nabla u|^2_g\,dv_g.
\end{equation}
Substituting (\ref{deform_vector}) into the left hand side of (\ref{prop21.0}),
we obtain
\begin{align}\label{cor1.1}
&\int_{B_1}\Big(2g^{ij}\langle \frac{\partial u}{\partial x_j}, \frac{\partial u}{\partial x_k}\rangle
(Y_\epsilon)_i^k-|\nabla u|^2_g {\rm{div}} Y_\epsilon\Big)\sqrt{g}\,dx\nonumber\\
&=(2-n)\int_{B_1}|\nabla u|_g^2\eta_\epsilon(x) \sqrt{g}\,dx
-\int_{B_1}|\nabla u|^2_g |x|\eta_\epsilon'(x)\sqrt{g}\,dx\nonumber\\
&\ +\int_{B_1} 2g^{ij}\langle \frac{\partial u}{\partial x_i}, \frac{\partial u}{\partial x_k}\rangle \frac{x^k x^j}{|x|}\eta_\epsilon'(x)\sqrt{g}\,dx.
\end{align}
Set the piecewise constant metric $\overline g$ by
$$
\overline g(x',x^n)=\begin{cases} \lim\limits_{y\rightarrow 0, \ y^n\ge 0} g(y) & \ {\rm{if}}\ x^n\ge 0\\
\lim\limits_{y\rightarrow 0, \ y^n< 0} g(y) & \ {\rm{if}}\ x^n<0.
\end{cases}
$$
Then we have
\begin{equation}\label{lip}
|g(x)-\overline g(x)|\le C|x|, \ \forall x\in B_1.
\end{equation}
It follows from  (\ref{jump_cond}) that we can assume 
$$\overline g(x)=\begin{cases} g_0 & \ {\rm{if}}\ x^n\ge 0\\
kg_0 & \ {\rm{if}}\ x^n<0,
\end{cases}
$$
for some positive constant $k\not=1$.
Thus we can estimate
\begin{eqnarray}\label{lhs2}
&&\int_{B_1} 2g^{ij}\langle \frac{\partial u}{\partial x_i}, \frac{\partial u}{\partial x_k}\rangle \frac{x^k x^j}{|x|}\eta_\epsilon'(x)\sqrt{g}\,dx\nonumber\\
&= &2\int_{B_1}\overline{g}^{ij}\langle \frac{\partial u}{\partial x_i}, \frac{\partial u}{\partial x_k}\rangle \frac{x^k x^j}{|x|}\eta_\epsilon'(x)\sqrt{g}\,dx
+2\int_{B_1}(g^{ij}-\overline{g}^{ij})\langle \frac{\partial u}{\partial x_i}, \frac{\partial u}{\partial x_k}\rangle \frac{x^k x^j}{|x|}\eta_\epsilon'(x)\sqrt{g}\,dx\nonumber\\
&=&I_\epsilon+II_\epsilon.
\end{eqnarray}
Since
$$\overline{g}^{ij}\langle \frac{\partial u}{\partial x_i}, \frac{\partial u}{\partial x_k}\rangle \frac{x^k x^j}{|x|}
\equiv h(x):=\begin{cases} |x||\frac{\partial u}{\partial r}|^2 & \ {\rm{if}}\ x^n\ge 0\\
 \frac1{k}|x||\frac{\partial u}{\partial r}|^2 & \ {\rm{if}}\ x^n< 0,
\end{cases}
$$
and $h(x)\ge 0$ for $x\in B_1$, we have
\begin{equation}\label{1epsilon}
I_\epsilon=\int_{B_1}h(x)\eta_\epsilon'(|x|)\sqrt{g}\,dx\le 0.
\end{equation}
For $II_\epsilon$, by (\ref{lip})  we have
\begin{equation}\label{2epsilon}\Big|II_\epsilon\Big|
\le Cr\int_{B_r}|\nabla u|^2\,dv_g \le Cr\int_{B_r}|\nabla u|^2_g\,dv_g.
\end{equation}
First substituting (\ref{1epsilon}) and (\ref{2epsilon}) into (\ref{lhs2}),  and then plugging the resulting (\ref{lhs2})  into (\ref{cor1.1}), and finally combining (\ref{cor1.1}) and (\ref{rhs}) with (\ref{prop21.0}), we obtain, after sending $\epsilon$ to zero,
$$(2-n)\int_{B_{r}}|\nabla{u}|_{g}^{2}dv_{g}+r\int_{\partial{B}_{r}}\left|\nabla u\right|_{g}^{2}\sqrt{g}\,dH^{n-1}
\ge -Cr\int_{{B}_{r}}|\nabla{u}|_{g}^{2}dv_{g}.$$
This implies
$$\frac{d}{dr}\left(e^{Cr}r^{2-n}\int_{B_{r}}|\nabla{u}|^{2}_{g}dv_{g}\right)\ge 0,$$
which clearly yields \eqref{cor1.02}.

To show (\ref{cor2.0}), it suffices to consider the case 
$$x^{0}\in{B}_{1/2}\setminus\Gamma_1,
\quad|B_{R}(x^{0})\cap{B}_{1}^{+}|>0\ \mbox{and} \ |B_{R}(x^{0})\cap{B}_{1}^{-}|>0.$$
For simplicity, assume $x^{0}\in{B}_{1}^{-}$. We divide it into two cases:\\
(i) $d(x^{0},\Gamma_1)=|x^{0}_{n}|\geq\frac{1}{4}R$:
\begin{itemize}
\item If $R\geq{r}\geq\frac{1}{4}R$, then it is easy to see
$$r^{2-n}\int_{B_{r}(x^{0})}|\nabla{u}|_{g}^{2}dv_{g}\leq\,4^{n-2}R^{2-n}\int_{B_{R}(x^{0})}|\nabla{u}|_{g}^{2}dv_{g}.$$
\item If $0<r<\frac{1}{4}R (\le d(x^{0},\Gamma_1))$, we have
$B_{\frac{R}{4}}(x^{0})\subset{B}_{1}^{-}$ so that (\ref{cor1.01}) implies
$$r^{2-n}\int_{B_{r}(x^{0})}|\nabla{u}|_{g}^{2}dv_{g}\leq e^{CR}\left(\frac{R}{4}\right)^{2-n}\int_{B_{\frac{R}{4}}(x^{0})}|\nabla{u}|_{g}^{2}dv_{g}
\leq\, e^{CR}R^{2-n}\int_{B_{R}(x^{0})}|\nabla{u}|_{g}^{2}dv_{g}.$$
\end{itemize}
(ii) $d(x^{0},\Gamma_1)=|x^{0}_{n}|<\frac{1}{4}R$:
\begin{itemize}
\item
If $R\geq{r}\geq\frac{1}{4}R$, then
$$r^{2-n}\int_{B_{r}(x^{0})}|\nabla{u}|_{g}^{2}dv_{g}\leq\,4^{n-2}R^{2-n}\int_{B_{R}(x^{0})}|\nabla{u}|_{g}^{2}dv_{g}.$$
\item
If $0<r\leq\,d(x^{0},\Gamma_1)=|x_n^0|<\frac{1}{4}R$, then by setting
$\overline{x}^{0}=(x^{0}_{1},\cdots,x_{n-1}^{0},0)$ we have
$$B_{r}(x^{0})\subset\,B_{|x^{0}_{n}|}(x^{0})\subset\,B_{2|x_{n}^{0}|}(\overline{x}^{0})\subset\,B_{\frac{R}{2}}(\overline{x}^{0})\subset\,B_{R}(x^{0}),$$
so that (\ref{cor1.02}) yields
\begin{align*}
r^{2-n}\int_{B_{r}(x^{0})}|\nabla{u}|_{g}^{2}dv_{g}&\leq|x_{n}^{0}|^{2-n}\int_{B_{|x_{n}^{0}|}(x^{0})}|\nabla{u}|_{g}^{2}dv_{g}\\
&\leq2^{n-2}(2|x_{n}^{0}|)^{2-n}\int_{B_{2|x_{n}^{0}|}(\overline{x}^{0})}|\nabla{u}|_{g}^{2}dv_{g}\\
&\leq2^{n-2}e^{CR}\left(\frac{R}{2}\right)^{2-n}\int_{B_{\frac{R}{2}}(\overline{x}^{0})}|\nabla{u}|_{g}^{2}dv_{g}\\
&\leq e^{CR}R^{2-n}\int_{B_{R}(x^{0})}|\nabla{u}|_{g}^{2}dv_{g}.
\end{align*}
\item
If $d(x^{0},\Gamma_1) (=|x_n^0|)\leq\,r<\frac{1}{4}R$, then we have
$$B_{r}(x^{0})\subset\,B_{2r}(\overline{x}^{0})\subset\,B_{\frac{R}{2}}(\overline{x}^{0})\subset\,B_{R}(x^{0}),$$
so that (\ref{cor1.02}) yields
\begin{align*}
r^{2-n}\int_{B_{r}(x^{0})}|\nabla{u}|_{g}^{2}dv_{g}&\leq2^{n-2}(2r)^{2-n}\int_{B_{2r}(\overline{x}^{0})}|\nabla{u}|_{g}^{2}dv_{g}\\
&\leq2^{n-2}e^{CR}\left(\frac{R}{2}\right)^{2-n}\int_{B_{\frac{R}{2}}(\overline{x}^{0})}|\nabla{u}|_{g}^{2}dv_{g}\\
&\leq e^{CR}R^{2-n}\int_{B_{R}(x^{0})}|\nabla{u}|_{g}^{2}dv_{g}.
\end{align*}
\end{itemize}
Therefore (\ref{cor2.0}) is proven.
\end{proof}

\section{$W^{1,p}$-estimate for elliptic equations with certain piecewise continuous coefficients}

In this section, we will show the global $W^{1,p}$-estimate for
elliptic equations with certain piecewise continuous coefficients, for
$1<p<+\infty$. As a corollary, we will establish the Hodge decomposition Theorem
3.2 for certain piecewise continuous metrics $g$, which is a key ingredient to prove Theorem \ref{thm1}
and may also have its own interest.

For a ball $B=B_r(0)\subset\mathbb R^n$, denote $B^\epsilon=\{x\in B: \ {\rm{dist}}(x,\partial B)\le\epsilon\}$  for $\epsilon>0$.
Let $\displaystyle (a_{ij}(x))_{1\le i, j\le n}$
be bounded measurable, uniformly elliptic on $B$, i.e., there exists $0<\lambda\le\Lambda<+\infty$ such that
\begin{equation}\label{rank1-convex}
\lambda|\xi|^2\le a_{ij}(x)\xi_i^\alpha\xi_\beta^j\le \Lambda |\xi|^2,
\ \ {\rm{a.e.}}\ x\in B, \ \forall \xi\in \mathbb R^{n}.
\end{equation}

\begin{theorem}\label{lem31} Assume $\displaystyle (a_{ij})$ satisfies
(\ref{rank1-convex}), and there exists $\epsilon>0$ such that
$\displaystyle (a_{ij})\in C\left(\overline{B^\pm}\right)\cap C\left(B^\epsilon\right)$ and is discontinuous on  $\partial B^+\setminus B^{\epsilon}$.
For $1<p<+\infty$, let
$f\in{L}^{p}(B,\mathbb{R}^{n})$. Then there exists a unique weak solution
$\displaystyle v\in W^{1,p}_0(B, \mathbb{R}^{n})$ to
\begin{equation}\label{lem31.0}
\begin{cases}
\sum\limits_{i,j}\frac{\partial}{\partial x_i}\left(a_{ij}\frac{\partial v}{\partial x_j}\right)
=\sum\limits_{i}\frac{\partial f_i}{\partial x_i} &\mbox{in}~B,\\
u=0 &\mbox{on}~\partial{B},
\end{cases}
\end{equation}
and
\begin{equation}\label{lp-estimate}
\left\|\nabla{v}\right\|_{L^{p}(B)}\leq C\left\|f\right\|_{L^{p}(B)}
\end{equation}
for some $C>0$ depending only on $\displaystyle
p \ {\rm{and}}\  (a_{ij})$.
\end{theorem}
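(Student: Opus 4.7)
The plan is to deduce the theorem as essentially a specialization of the global $W^{1,p}$ theory of Byun--Wang \cite{bw}, so the bulk of the work consists in checking that the structural hypotheses on $(a_{ij})$ in the present setup fit into their framework, together with the classical piecewise regularity theory of Li--Nirenberg \cite{ln} for the frozen-coefficient reference problem across the interface. First I would dispose of the case $p=2$: since $(a_{ij})$ is bounded and uniformly elliptic, the Lax--Milgram theorem applied to the bilinear form $(u,w)\mapsto\int_B a_{ij}\,\partial_j u\cdot\partial_i w$ on $H^1_0(B,\mathbb{R}^n)$ gives existence, uniqueness, and the energy estimate $\|\nabla v\|_{L^2}\le C\|f\|_{L^2}$.

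For the general case $1<p<\infty$, I would establish the a priori $W^{1,p}$ estimate \eqref{lp-estimate} and then obtain existence and uniqueness by approximation of $f\in L^p$ by $L^2\cap L^p$ data together with the bound just established. To prove the a priori estimate, I would cover $B$ by three kinds of neighborhoods: (a) interior neighborhoods contained in $B^+$ or $B^-$, where $(a_{ij})$ is continuous, hence in VMO, so the classical Calder\'on--Zygmund estimates of Di~Fazio and Iwaniec apply; (b) boundary neighborhoods contained in $B^\epsilon$, where $(a_{ij})$ is continuous up to $\partial B$, so the standard global $W^{1,p}$ theory handles them; (c) neighborhoods of points $x_0\in\partial B^+\setminus B^\epsilon$, where the interface is $C^{1,1}$ and the discontinuity is genuine. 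For (c) I would flatten $\partial B^+$ near $x_0$ by a $C^{1,1}$ diffeomorphism, reducing to an equation on a half-ball configuration where $(a_{ij})$ is continuous on each side of the flat interface $\{y_n=0\}$. Freezing coefficients at $y_0$ on each side produces a reference transmission problem with piecewise constant coefficients whose solutions admit pointwise Lipschitz gradient bounds on each side by Li--Nirenberg \cite{ln}. This comparison yields exactly the small-BMO-on-each-side oscillation control required by the Byun--Wang theorem, so their good-$\lambda$/maximal function iteration delivers \eqref{lp-estimate} for $p>2$. The range $1<p<2$ is then obtained by duality applied to the adjoint equation with coefficient matrix $(a_{ji})$, which satisfies the same hypotheses.

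The main technical obstacle I anticipate is step (c): one has to carry the piecewise continuity of $(a_{ij})$ through the flattening map (straightforward, since the map is $C^{1,1}$ and preserves each side), and then verify that the resulting coefficients, after subtracting the piecewise constant frozen-coefficient matrix, fall within the precise BMO-smallness regime of the Byun--Wang theorem uniformly in $x_0\in\partial B^+\setminus B^\epsilon$. The uniformity is what requires the $C^{1,1}$ regularity of $\partial B^+$ (so the flattening maps have uniformly controlled $C^{1,1}$ norms) and the uniform continuity of $(a_{ij})$ on $\overline{B^\pm}$. Once this uniform check is in place, the patching of the interior, boundary and interface estimates into a global bound is routine via a finite covering of $\overline{B}$ and a standard partition-of-unity/absorption argument.
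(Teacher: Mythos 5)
Your proposal is correct in substance and rests on the same external pillar as the paper's proof, namely the Byun--Wang global $W^{1,p}$ theory for coefficients that are merely measurable in one direction; the Lax--Milgram treatment of $p=2$, the approximation/duality bookkeeping for general $p$, and the covering of $\overline B$ by interior, boundary and interface neighborhoods are all sound. The difference lies in how the hypothesis of the Byun--Wang theorem is verified. The paper's argument is a two-line observation: since $(a_{ij})\in C\left(\overline{B^+}\right)\cap C\left(\overline{B^-}\right)\cap C\left(B^\epsilon\right)$ and the discontinuity set $\partial B^+\setminus B^\epsilon$ lies in the flat hyperplane $\{x^n=0\}$, one has
$$\lim_{r\downarrow0}\max_{x_0=(x_0',x_0^n)\in\overline B}\big\|a_{ij}(x',x^n)-a_{ij}(x_0',x^n)\big\|_{L^\infty\left(B_r(x_0)\right)}=0,$$
i.e.\ locally the coefficients are uniformly close to functions of $x^n$ alone; this is a strong form of the $(\delta,R)$-vanishing of codimension one condition (2.5)--(2.6) of Byun--Wang, and their Theorem 2.2 (the ball being trivially Reifenberg flat) then delivers existence, uniqueness and \eqref{lp-estimate} in one stroke. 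Two remarks on your version. First, no flattening is needed here: $B^\pm$ are the two half-balls, so the interface is already flat --- the $C^{1,1}$ flattening of $\Gamma$ was performed once and for all in \S2 before this theorem is invoked, which is why the uniformity issue you worry about does not arise. Second, your assertion that the comparison with the frozen-coefficient transmission problem ``yields the oscillation control required by Byun--Wang'' inverts the logic: that oscillation control is a hypothesis on the coefficients, verified directly from uniform continuity on $\overline{B^\pm}$ as above, whereas the comparison with reference problems whose coefficients depend only on $x^n$ (together with Lipschitz bounds for those, which you propose to import from Li--Nirenberg) is the internal mechanism of Byun--Wang's proof, not something the user of their theorem must supply. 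Your route amounts to re-deriving that mechanism; it would work, but it is unnecessary once their theorem is quoted as a black box.
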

\begin{proof}
By our assumption, it is easy to verify that for any $\delta>0$,
there exists $R=R(\delta)>0$ such that the coefficient function $(a_{ij})$ satisfies the  $(\delta,R)$-vanishing of  codimension 1
conditions (2.5) and (2.6) of Byun-Wang \cite{bw} page 2652. In fact, we have a stronger property:
$$
\lim_{r\downarrow 0}\max_{x_0=(x_0', x_0^n)\in\overline B}\Big\|a_{ij}(x',x^n)-a_{ij}(x_0', x^n)\Big\|_{L^\infty\left(B_r((x_0', x_0^n))\right)}=0.
$$
Thus Theorem \ref{lem31} follows by direct applications of \cite{bw} Theorem 2.2, page  2653.
\end{proof}

As an immediate consequence of Theorem \ref{lem31}, we have the following Hodge decomposition on $B$ equipped with
certain piecewise continuous metrics $g$.
\begin{theorem}\label{thm36} Let
$\bar{g}$ be a piecewise continuous metric on $B$
such that $\bar g\in{C}\left({\overline{B^\pm}}\right)\cap C\left(B^\delta\right)$ for some $\delta>0$,
and is discontinuous on $\partial B^+\setminus B^\delta$.  Then for any $1<p<+\infty$, 
${F}=(F_{1},\cdots,F_{n})\in{L}^{p}(B,\mathbb{R}^{n})$, there
exist $G\in{W}_{0}^{1,p}(B)$ and
$H\in{L}^{p}(B,\mathbb{R}^{n})$ such that
\begin{equation}\label{thm36.01}
F=\nabla{G}+H,\ \ 0= \mathrm{div}_{\bar{g}}H\ (:=\frac{1}{\sqrt{\bar{g}}}\frac{\partial}{\partial{x}_{i}}(\sqrt{\bar{g}}\bar{g}^{ij}H_{j}))
\ {\rm{in}}\  B,
\end{equation}
and there exists $C=C(p,n,\bar g)>0$ such that
\begin{equation}\label{thm36.03}
\left\|\nabla{G}\right\|_{L^{p}(B)}+\left\|H\right\|_{L^{p}(B)}\leq C\left\|F\right\|_{L^{p}(B)}.
\end{equation}
\end{theorem}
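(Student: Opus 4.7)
The plan is to obtain Theorem~\ref{thm36} as a direct corollary of the $W^{1,p}$--estimate (Theorem \ref{lem31}). The idea is: construct $G$ by solving the ``Poisson equation for the Hodge potential'' with coefficients that encode the metric $\bar g$, and then simply set $H:=F-\nabla G$, so that $\operatorname{div}_{\bar g}H=0$ follows by construction.

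Concretely, I would first define the coefficient matrix
$$a_{ij}(x):=\sqrt{\bar g(x)}\,\bar g^{ij}(x),\qquad x\in B,$$
and the data vector $\tilde f_i:=a_{ij}F_j$. Since $\bar g$ is uniformly elliptic and bounded on $B$, the same holds for $(\bar g^{ij})$, and $\sqrt{\bar g}$ is bounded above and below. Consequently $(a_{ij})$ is uniformly elliptic in the sense of \eqref{rank1-convex}; moreover $(a_{ij})$ lies in $C(\overline{B^\pm})\cap C(B^\delta)$ with discontinuities confined to $\partial B^+\setminus B^\delta$, since these regularity properties are preserved under taking inverses, determinants, and square roots of matrices that are uniformly elliptic on each closed piece. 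Also $\tilde f\in L^p(B,\mathbb R^n)$ with $\|\tilde f\|_{L^p}\le C(\bar g)\|F\|_{L^p}$.

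Now apply Theorem \ref{lem31} to obtain a unique $G\in W^{1,p}_0(B)$ solving
$$\frac{\partial}{\partial x_i}\!\left(a_{ij}\frac{\partial G}{\partial x_j}\right)=\frac{\partial \tilde f_i}{\partial x_i}\quad\text{in }B,$$
together with the estimate $\|\nabla G\|_{L^p(B)}\le C\|\tilde f\|_{L^p(B)}\le C\|F\|_{L^p(B)}$. Set $H:=F-\nabla G\in L^p(B,\mathbb R^n)$. Then for each $i$,
$$\sqrt{\bar g}\,\bar g^{ij}H_j=a_{ij}F_j-a_{ij}\frac{\partial G}{\partial x_j}=\tilde f_i-a_{ij}\frac{\partial G}{\partial x_j},$$
so that $\frac{\partial}{\partial x_i}(\sqrt{\bar g}\,\bar g^{ij}H_j)=0$ in the distributional sense, which is exactly $\operatorname{div}_{\bar g}H=0$. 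The bound \eqref{thm36.03} follows from the triangle inequality $\|H\|_{L^p}\le\|F\|_{L^p}+\|\nabla G\|_{L^p}\le C\|F\|_{L^p}$.

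The main (and really only) point worth verifying carefully is that the coefficients $a_{ij}=\sqrt{\bar g}\,\bar g^{ij}$ truly inherit the hypotheses of Theorem \ref{lem31}, i.e.\ that the piecewise continuity assumption on $\bar g$, together with its uniform ellipticity, is enough to make $(a_{ij})$ fall within the Byun--Wang framework quoted in the previous theorem. Once that routine check is in place, everything else is algebra; no further analytical work beyond invoking Theorem \ref{lem31} is needed, and the scheme does not require any argument about the interface $\Gamma_1$ beyond what is already built into that theorem.
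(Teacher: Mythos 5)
Your proposal is correct and follows essentially the same route as the paper: set $a_{ij}=\sqrt{\bar g}\,\bar g^{ij}$, verify that these coefficients satisfy the hypotheses of Theorem \ref{lem31}, solve the divergence-form equation with data $\sqrt{\bar g}\,\bar g^{ij}F_j$ to get $G$, and define $H=F-\nabla G$ so that $\mathrm{div}_{\bar g}H=0$ holds by construction, with \eqref{thm36.03} following from the $W^{1,p}$-estimate and the triangle inequality. No substantive difference from the paper's argument.
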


\begin{proof} Set $a_{ij}=\sqrt{\bar g}{\bar g}^{ij}$ on $B$ for $1\le i, j\le n$. It is easy to verify that $(a_{ij})$
satisfies the conditions of Theorem \ref{lem31}. Thus Theorem \ref{lem31} yields that
there exists a unique solution $G\in{W}_{0}^{1,p}(B)$ to
\begin{equation}\label{thm36.1}
\begin{cases}
\frac{\partial}{\partial{x}_{i}}\left(\sqrt{\bar{g}}\bar{g}^{ij}\frac{\partial{G}}{\partial{x}_{j}}\right)
=\frac{\partial}{\partial{x}_{i}}\left(\sqrt{\bar{g}}\bar{g}^{ij}F_{j}\right) &\mbox{in}~B,\\
G=0 & \ {\rm{on}}\ \partial B,
\end{cases}
\end{equation}
and
$$\left\|\nabla{G}\right\|_{L^{p}(B)}\leq C\left\|\sqrt{\bar{g}}\bar{g}^{ij}F_{j}\right\|_{L^{p}(B)}
\leq C\left\|F\right\|_{L^{p}(B)}.$$
Set $H=F-\nabla{G}$.  Then we have
$$\mathrm{div}_{\bar{g}}H=\frac{1}{\sqrt{\bar{g}}}\frac{\partial}{\partial{x}_{i}}
\left(\sqrt{\bar{g}}\bar{g}^{ij}\left(F_{j}-\frac{\partial{G}}{\partial{x}_{j}}\right)\right)=0 \ {\rm{on}}\ B,$$
and
$$\left\|H\right\|_{L^p(B_\frac12)}\le \left\|F\right\|_{L^p(B_\frac12)}+\left\|\nabla G\right\|_{L^p(B)}
\le C\left\|F\right\|_{L^p(B)}.$$
This completes the proof.
\end{proof}

\section{H\"{o}lder continuity}

In this section, we will prove that any stationary harmonic map on $(B_{1},g)$, with a piecewise Lipschitz
continuous metric $g\in{C}^{0,1}(B_{1}^{\pm}\cup\Gamma_1)$, is H\"older continuous under a smallness
condition of $\displaystyle\int_{B_1}|\nabla u|_g^2\,dv_g$. The idea is based on suitable modifications
of the original argument by Bethuel \cite{b} (see also Ishizuka-Wang \cite{iw}), thanks to the energy monotonicity inequality and the Hodge
decomposition theorem established in previous sections. More precisely, we have

\begin{theorem}\label{thm41}
There exist $\epsilon_{0}>0$ and $\alpha_{0}\in(0,1)$ depending only on $n, g$ such that if
the metric $g\in{C}^{0,1}(B_{1}^{\pm}\cup\Gamma_1)$ satisfies the condition (\ref{jump_cond})
on $\Gamma_1$, and
$u\in{W}^{1,2}(B_{1},N)$ is a stationary harmonic map on $(B_1,g)$ satisfying
\begin{equation}\label{small_norm_energy}
r_0^{2-n}\int_{B_{r_0}(x_0)}|\nabla{u}|_{g}^{2}\,dv_g\leq\epsilon_{0}^{2}
\end{equation}
for some $x_0\in{B}_{\frac{1}{2}}$ and $0<r_0\leq\frac{1}{4}$, then
$\displaystyle u\in{C}^{\alpha_{0}}(B_{\frac{r_0}{2}}(x_0),N)$ and
\begin{equation}\label{holder_continuity}
\Big[u\Big]_{C^{\alpha_{0}}(B_{\frac{r_0}{2}}(x_0))}\leq\,C(r_0, \epsilon_0).
\end{equation}
\end{theorem}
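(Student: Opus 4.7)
The plan is to adapt the moving-frame argument of Hélein-Bethuel, in the form of Ishizuka-Wang \cite{iw}, to the piecewise Lipschitz setting, with the Hodge decomposition of Theorem \ref{thm36} replacing its smooth counterpart. The starting point is to convert the smallness hypothesis \eqref{small_norm_energy} into a uniform Morrey smallness: applying the monotonicity inequality \eqref{cor2.0} at every $y\in B_{r_0/2}(x_0)$ gives
\begin{equation*}
\rho^{2-n}\int_{B_\rho(y)}|\nabla u|_g^2\,dv_g \leq C\epsilon_0^2
\quad \text{for all } y\in B_{r_0/2}(x_0),\ 0<\rho\le r_0/4,
\end{equation*}
which treats balls centred on $\Gamma_1$ and off $\Gamma_1$ uniformly.

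Fix such a ball $B_\rho(y)$ and construct a Coulomb moving frame $\{e_\alpha\}$ on $u^\ast TN|_{B_\rho(y)}$ by minimising the twisted Dirichlet energy $\sum_{\alpha,\beta}\int |\langle\nabla e_\alpha,e_\beta\rangle|_g^2\,dv_g$ among frames with fixed boundary values. Only the uniform ellipticity of $g$ is used for existence and for the bound $\|\nabla e_\alpha\|_{L^2(dv_g)}\le C\|\nabla u\|_{L^2(dv_g)}$, so the construction goes through unchanged; the Euler-Lagrange equations yield the Coulomb condition $\mathrm{div}_g\,\Psi^{\alpha\beta}=0$, where $\Psi^{\alpha\beta}_i:=\langle\partial_i e_\alpha,e_\beta\rangle$ is antisymmetric in $\alpha,\beta$. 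Setting $\omega^\alpha_i:=\langle\partial_i u,e_\alpha\rangle$, the harmonic map equation \eqref{harmonic} rewrites as
\begin{equation*}
\mathrm{div}_g\,\omega^\alpha = \sum_\beta \langle \Psi^{\alpha\beta},\omega^\beta\rangle_g \qquad \text{in } B_\rho(y),
\end{equation*}
a bilinear pairing of the $g$-divergence-free field $\Psi^{\alpha\beta}$ with the field $\omega^\beta$.

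Apply Theorem \ref{thm36} to each $\omega^\alpha$ to obtain $\omega^\alpha=\nabla G^\alpha+H^\alpha$ with $G^\alpha\in W^{1,p}_0(B_\rho(y))$ and $\mathrm{div}_g H^\alpha=0$. The function $G^\alpha$ then satisfies a divergence-form equation whose right-hand side splits into (i) the Wente-type pairing $\sum_\beta\langle\Psi^{\alpha\beta},H^\beta\rangle_g$ of two $g$-divergence-free $L^2$-fields, and (ii) lower-order commutator terms of order $O(\rho)$ arising from $\partial_k(\sqrt{g}g^{ij})\in L^\infty(B_1^\pm)$. Applying the $W^{1,p}$-estimate of Theorem \ref{lem31} to $G^\alpha$ for some $p>2$ slightly larger than $2$, combined with the Morrey smallness of $|\nabla u|_g^2$, yields the decay
\begin{equation*}
(\tau\rho)^{2-n}\int_{B_{\tau\rho}(y)}|\nabla u|_g^2\,dv_g \leq \tfrac12\,\rho^{2-n}\int_{B_\rho(y)}|\nabla u|_g^2\,dv_g
\end{equation*}
for a suitable $\tau\in(0,1)$, once $\epsilon_0$ and $r_0$ are small. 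Iterating this inequality at every $y\in B_{r_0/2}(x_0)$ produces $r^{2-n}\int_{B_r(y)}|\nabla u|_g^2\,dv_g\le Cr^{2\alpha_0}$ for some $\alpha_0>0$, whence \eqref{holder_continuity} follows from Morrey's Dirichlet growth lemma.

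The main obstacle is making the Wente-type compensated-compactness step work with a metric that is only piecewise Lipschitz: classically it rests on the $\mathcal{H}^1$-BMO duality of Coifman-Lions-Meyer-Semmes applied to Euclidean Jacobians, which is not directly available here. The workaround is to keep every Hodge decomposition and divergence-free condition measured with respect to $g$, as in Theorem \ref{thm36}, so that the cancellation $\mathrm{div}_g H^\beta=0$, coupled with the antisymmetry and $\mathrm{div}_g$-freeness of $\Psi^{\alpha\beta}$, is precisely what Theorem \ref{lem31} needs to supply a slightly-better-than-$L^2$ bound on $\nabla G^\alpha$. The discontinuity of $g$ across $\Gamma_1$ then enters only through the lower-order term $\partial_i(\sqrt{g}g^{ij})\omega^\alpha_j$, which is bounded on each of $B_\rho(y)\cap B_1^\pm$ separately and produces the harmless $O(\rho)$ error above.
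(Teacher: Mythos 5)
Your overall strategy --- Coulomb frame, Hodge decomposition with respect to the piecewise-continuous metric via Theorem \ref{thm36}, the div-curl/$\mathcal H^1$--BMO cancellation, Morrey decay and iteration --- is exactly the paper's. But there is a genuine gap in your choice of integrability exponent. You propose to run the argument with some $p>2$ and extract a ``slightly-better-than-$L^2$'' bound on $\nabla G^\alpha$. This cannot work as stated, for two reasons. First, Theorem \ref{thm36} requires the field being decomposed to lie in $L^p$, and for $p>2$ you have no a priori control of $\|\nabla u\|_{L^p}$; the only quantity the monotonicity inequality \eqref{cor2.0} controls is $\sup_{B_\rho(y)}\rho^{2-n}\int|\nabla u|_g^2$, which by H\"older dominates the Morrey norm $\|\nabla u\|_{M^{p,p}}$ only for $p\le 2$. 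Second, the right-hand side $\sum_\beta\langle\Psi^{\alpha\beta},H^\beta\rangle_g$ is a product of two $L^2$ fields, hence merely $L^1$ (or $\mathcal H^1$ after exploiting the div-curl structure); an equation $\mathrm{div}(a\nabla G)=f$ with $f\in\mathcal H^1$ yields at best $\nabla G\in L^{n/(n-1)}$, which is below $2$ for $n\ge3$, so no $W^{1,p}$ bound with $p>2$ is available from Theorem \ref{lem31}. The paper resolves this by working throughout in $M^{p,p}$ for $1<p<\frac{n}{n-1}$: the dual exponent $p'>n$ makes the test function $h\in W^{1,p'}_0$ bounded by Sobolev embedding, and it is precisely this boundedness that lets the $\mathcal H^1$--BMO pairing close the estimate $\|\nabla\phi^{(2)}_\alpha\|_{L^p}\lesssim \|\nabla u\|_{L^2}\|\nabla u\|_{M^{p,p}}\theta^{\frac np-\frac n2}$; the harmonic part of the potential is estimated separately by elliptic decay, and the H\"older conclusion comes from Morrey's lemma applied to the subcritical exponent $p$.

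Two secondary points. You decompose $\omega^\alpha=\langle\nabla u,e_\alpha\rangle$ directly on $B_\rho(y)$ with respect to $g$, but Theorem \ref{thm36} requires the metric to be continuous in a collar of $\partial B$, and $\Gamma_1$ crosses $\partial B_\rho(y)$; the paper instead decomposes the truncated field $\langle\nabla((u-u_{2\theta})\eta),e_\alpha\rangle$ with respect to an interpolated metric $\widetilde g$ that equals $g$ on $B_\theta$ and $g_0$ near $\partial B_{2\theta}$, at the cost that $\mathrm{div}_{\widetilde g}\psi_\alpha=0$ and $\mathrm{div}_g$ agree only on the inner ball, where the potential is then further split into a $\Delta_g$-harmonic part and a zero-boundary-data part. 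Finally, after the one-scale decay you still need the covering argument distinguishing balls $B_s(y)$ with $y$ on, near, and far from $\Gamma_1$ (using the interior $\epsilon$-regularity of Bethuel/Ishizuka--Wang off the interface) to upgrade decay at interface-centred balls to decay of the full Morrey norm; your ``iterate at every $y$'' compresses this nontrivial step.
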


\begin{proof}[Proof of Theorem \ref{thm41}]
The proof is based on suitable modifications of \cite{b} and \cite{iw}.
First, observe that if $x_0=(x_0',x_0^{n})\in{B}^{\pm}$, it follows from the monotonicity inequality (\ref{cor2.0}) that we may assume  (\ref{small_norm_energy}) holds for some $0<r_0<|x_0^{n}|$.
Then the $\epsilon_{0}$-regularity theorem by Bethuel \cite{b} (see \cite{iw} for domains
with $C^{0,1}$ metrics)
implies that for some $0<\alpha_0<1$, $u\in{C}^{\alpha_0}(B_{\frac{r_0}2}(x_0))$ and (\ref{holder_continuity})
holds. Hence it suffices to consider the case 
$x_0=(x_0',0)\in\Gamma_\frac12$.  By translation and scaling, we may assume
$x_0=(0,0)$ and proceed as
follows.

\noindent{\it {Step 1.}} As in \cite{b} \cite{h} \cite{iw},
assume that there exists an orthonormal frame on $u^*TN\Big|_{B_1}$.
For $0<\theta<\frac{1}{2}$ to be determined later, let
$\{e_{\alpha}\}_{\alpha=1}^{l}\subset {W}^{1,2}(B_{2\theta},\mathbb{R}^{k})$ be
a Coulomb gauge orthonormal frame of $u^{*}TN\Big|_{B_{2\theta}}$:
\begin{equation}
\begin{cases}
\mbox{div}_{g}(\langle\nabla{e}_{\alpha},e_{\beta}\rangle)=0 \quad\mbox{in}~B_{2\theta} \quad(1\leq\alpha,\beta\leq\,l),\\
\sum\limits_{\alpha=1}^{l}\int_{B_{2\theta}}|\nabla{e}_{\alpha}|_{g}^{2}dv_{g}
\leq\,C\int_{B_{2\theta}}|\nabla{u}|_{g}^{2}dv_{g}.
\end{cases}
\end{equation}
For $1\le\alpha\le l$, consider $\displaystyle\langle\nabla\left((u-u_{2\theta})\eta\right),e_{\alpha}\rangle$,
where $\displaystyle u_{2\theta}=\fint_{B_{2\theta}}u$ is the average of $u$ on $B_{2\theta}$,  and
$\displaystyle\eta\in{C}_{0}^{\infty}(B_{1})$ satisfies
\begin{equation*}
0\leq\eta\leq1;\quad\eta=1~\mbox{in}~B_{\theta};\quad\eta=0~\mbox{outside}~B_{\frac74\theta};
\quad|\nabla\eta|\leq\frac{2}{\theta}.
\end{equation*}

Let $g_0$ be the standard metric on $\mathbb R^n$.  We define a new metric $\widetilde{g}$ on
$B_{2\theta}$ by letting
$$\widetilde{g}(x)=\eta(x)g(x)+(1-\eta(x))g_0(x), \ x\in B_{2\theta}.$$
Then it is easy to see that
$$\widetilde{g}\equiv g \ {\rm{on}}\ B_\theta,\
  \widetilde{g}\equiv g_0\ {\rm{outside}}\  B_{\frac74\theta},
\ {\rm{and}}\
\widetilde{g}\in C(\overline{B_{2\theta}^\pm})\cap C(B_{2\theta}\setminus B_{\frac74\theta}).$$
In particular, $\widetilde{g}$ satisfies the condition of Theorem \ref{thm36}.
Hence, by Theorem \ref{thm36}, we have that for $\displaystyle 1<p<\frac{n}{n-1}$, there exist
$\phi_{\alpha}\in{W}^{1,p}_0(B_{2\theta})$ and
$\psi_{\alpha}\in{L}^{p}(B_{2\theta})$ such that
\begin{equation}\label{hodge_decomp}
\begin{cases}
\langle\nabla\left((u-u_{2\theta})\eta\right),e_{\alpha}\rangle=\nabla\phi_{\alpha}+\psi_{\alpha},
\quad\mathrm{div}_{\widetilde{g}}(\psi_{\alpha})=0\ ~\mbox{in}~B_{2\theta}, \\
\|\nabla\phi_{\alpha}\|_{L^{p}(B_{2\theta})}+\|\psi_{\alpha}\|_{L^{p}(B_{2\theta})}
\lesssim\|\nabla\left((u-u_{2\theta})\eta\right)\|_{L^{p}(B_{2\theta})}\lesssim\|\nabla{u}\|_{L^{p}(B_{2\theta})}.
\end{cases}
\end{equation}
Since $u$ satisfies the harmonic map equation (\ref{harmonic}), we have
\begin{equation}\label{harmonic1}
\mathrm{div}_{g}\left(\langle\nabla{u},e_{\alpha}\rangle\right)=g^{ij}\nabla_{i}u\langle\nabla_{j}e_{\alpha},e_{\beta}\rangle\,e_{\beta}
\quad\mbox{in}~B_{1}.
\end{equation}
Thus we obtain
\begin{equation}
\label{harmonic2}
\Delta_{g}\phi_{\alpha}=g^{ij}\nabla_{i}u\langle\nabla_{j}e_{\alpha},e_{\beta}\rangle\,e_{\beta}
\quad\mbox{in}~B_{\theta}.
\end{equation}
Set
$\phi_{\alpha}=\phi_{\alpha}^{(1)}+\phi_{\alpha}^{(2)}$, where $\phi_\alpha^{(1)}$ solves
\begin{equation}\label{phi_1_eqn}
\begin{cases}
\Delta_{g}\phi_{\alpha}^{(1)}=0 &\mbox{in}~B_{\theta},\\
\phi_{\alpha}^{(1)}=\phi_{\alpha} &\mbox{on}~\partial{B}_{\theta},
\end{cases}
\end{equation}
and $\phi_{\alpha}^{(2)}$ solves
\begin{equation}\label{phi_2_eqn}
\begin{cases}
\Delta_{g}\phi_{\alpha}^{(2)}=g^{ij}\nabla_{i}u\langle\nabla_{j}e_{\alpha},e_{\beta}\rangle\,e_{\beta}
&\mbox{in}~B_{\theta},\\
\phi_{\alpha}^{(2)}=0 &\mbox{on}~\partial{B}_{\theta}.
\end{cases}
\end{equation}

\noindent{\it{Step 2.}} Estimation of $\phi_{\alpha}^{(1)}$: It is well-known (cf. \cite{gt})  that
$\phi_{\alpha}^{(1)}\in C^{\alpha_0}(B_{\theta})$ for some
$\alpha_0\in(0,1)$, and for any $0<r\le\frac{\theta}{2}$
\begin{equation}\label{phi_1_est1}\left[\phi_{\alpha}^{(1)}\right]^{p}_{C^{\alpha_0}(B_{\frac{r}{2}})}
\lesssim\theta^{p-n}\int_{B_{\theta}}|\nabla\phi_{\alpha}^{(1)}|^{p}\,dx\leq
C\theta^{p-n}\int_{B_{2\theta}}|\nabla{u}|^{p}\,dx,
\end{equation}
and
\begin{equation}\label{phi_1_est2}
(\tau\theta)^{p-n}\int_{B_{\tau\theta}}|\nabla\phi_{\alpha}^{(1)}|^{p}\leq
C\tau^{p\alpha_0}\Big\|\nabla{u}\Big\|_{M^{p,p}(B_1)}, \ \forall 0<\tau<1,
\end{equation}
where $M^{p,p} (\cdot)$ denotes the Morrey space: 
$$M^{p,p}(E)
:=\Big\{f:E\to\mathbb R: \left\|f\right\|^{p}_{M^{p,p}(E)}=\sup
_{B_r(x)\subset \mathbb R^n}\left\{r^{p-n}\int_{B_{r}(x)\cap E}|f|^{p}\,dx\right\}
<+\infty\Big\}, \ E\subset \mathbb R^n.$$
\noindent{\it{Step 3.}} Estimation of $\phi_{\alpha}^{(2)}$: First, denote by $\mathcal H^1(\mathbb R^n)$ 
the Hardy space on $\mathbb R^n$ and $\hbox{BMO}(E)$ the BMO space on $E$ for any open set $E\subset\mathbb R^n$.  By (4.13) of \cite{iw}  page 435,
for $\displaystyle p'=\frac{p}{p-1}>n$, there exists
$\displaystyle h\in{W}^{1,p'}_{0}(B_{\theta})$, with
$\displaystyle\|\nabla{h}\|_{L^{p'}(B_{\theta})}=1$, such that
$$\left\|\nabla\phi_{\alpha}^{(2)}\right\|_{L^{p}(B_{\theta})}\leq C\int_{B_{\theta}}\langle\nabla\phi_{\alpha}^{(2)},\nabla{h}\rangle_{g}dv_{g}.$$
Hence by the equation (\ref{phi_2_eqn}),  (\ref{hodge_decomp}), and the duality between $\mathcal H^1$
and BMO, we have
\begin{align}
\left\|\nabla\phi_{\alpha}^{(2)}\right\|_{L^{p}(B_{\theta})}
&\leq C\int_{B_{\theta}}\sqrt{g}g^{ij}\langle\nabla_{i}u\langle\nabla_{j}e_{\alpha},e_{\beta}\rangle\rangle(e_{\beta}h)
\,dx\nonumber\\
&=-C\int_{B_{\theta}}\sqrt{g}g^{ij}\langle\nabla_{j}e_{\alpha},e_{\beta}\rangle\rangle
\nabla_i(e_{\beta}h) u
\,dx\nonumber\\
&\leq C\Big\|\sqrt{g}g^{ij}\langle\nabla_{j}e_{\alpha},e_{\beta}\rangle\rangle
\nabla_i(e_{\beta}h)\Big\|_{\mathcal H^1(\mathbb R^n)}\Big[u\Big]_{\hbox{BMO}(B_{2\theta})}
\nonumber\\
&\lesssim\|\sqrt{g}g^{ij}\langle\nabla_{j}e_{\alpha},e_{\beta}\rangle\|_{L^{2}(B_{\theta})}\|\nabla(e_{\beta}h)\|_{L^{2}(B_{\theta})}\left[u\right]_{\mathrm{BMO}(B_{2\theta})}
\nonumber\\
&\lesssim\|\nabla{u}\|_{L^{2}(B_{2\theta})}\|\nabla{u}\|_{M^{p,p}(B_{1})}\cdot\theta^{\frac{n}{p}-\frac{n}{2}},
\end{align}
where we have used:\\
(i) Since $\hbox{div}_g(\langle\nabla e_\alpha,e_\beta\rangle)=0$ in $B_\theta$ and $h\in W^{1,p'}_0(B_\theta)$, we have 
$\displaystyle\sqrt{g}g^{ij}\langle\nabla_{j}e_{\alpha},e_{\beta}\rangle\rangle
\nabla_i(e_{\beta}h)\in\mathcal H^1(\mathbb R^n)$ and
$$
\Big\|\sqrt{g}g^{ij}\langle\nabla_{j}e_{\alpha},e_{\beta}\rangle
\nabla_i(e_{\beta}h)\Big\|_{\mathcal H^1(\mathbb R^n)}\le
C\left\|\sqrt{g}g^{ij}\langle\nabla_{j}e_{\alpha},e_{\beta}\rangle\right\|_{L^2(B_\theta)}\left\| \nabla_i(e_{\beta}h)\right\|_{L^2(B_\theta)}.
$$
(ii) Since $p'>n$, the Sobolev embedding implies $\displaystyle h\in C^{1-\frac{n}{p'}}(B_\theta)$ and
$$\left\|h\right\|_{L^\infty(B_\theta)}\le C\theta^{1-\frac{n}{p'}}.$$
so that
$$\|\nabla(e_{\beta}h)\|_{L^{2}(B_{\theta})}
\le \|\nabla e_\beta\|_{L^2(B_\theta)}\|h\|_{L^\infty(B_\theta)}
+\|\nabla h\|_{L^p(B_\theta)}\theta^{\frac{n}{p}-\frac{n}{2}}
\leq C\theta^{\frac{n}{p}-\frac{n}{2}},$$
(iii) By Poincar\'e inequality, it holds
$$\left [u\right]_{\hbox{BMO}(B_{2\theta})}\le C\|\nabla u\|_{M^{p,p}(B_1)}.$$

Putting the estimates of $\phi_\alpha^{(1)}$ and $\phi_\alpha^{(2)}$ together, we obtain
\begin{equation}\label{phi_grad_estimate}
    \left((\tau\theta)^{p-n}\int_{B_{\tau\theta}}|\nabla\phi_{\alpha}|^{p}dx\right)^{\frac{1}{p}}
    \leq C\left[\tau^{\alpha_0}+\tau^{1-\frac{n}{p}}\epsilon_{0}\right]\|\nabla{u}\|_{M^{p,p}(B_{1})},
\ \forall 0<\tau<1.
\end{equation}

\noindent{\it{Step 4.}} Estimation of $\psi_{\alpha}$: Since $\hbox{div}_{\widetilde g}(\psi_\alpha)=0$ on
$B_{2\theta}$,  we have
\begin{align*}
\int_{B_{2\theta}}|\psi_{\alpha}|^{2}_{\widetilde g}dv_{\widetilde g}&=\int_{B_{2\theta}}\left\langle(\psi_{\alpha}+\nabla\phi_{\alpha}),\psi_{\alpha}\right\rangle_{\widetilde g}dv_{\widetilde g}\\
&=\int_{B_{2\theta}}\langle\langle\nabla((u-u_{2\theta})\eta),e_{\alpha}\rangle,\psi_{\alpha}\rangle_{\widetilde g}dv_{\widetilde g}\\
&=-\int_{B_{2\theta}} (u-u_{2\theta})\eta\langle\nabla e_{\alpha},\psi_{\alpha}\rangle_{\widetilde g}
dv_{\widetilde g}\\
&\lesssim\left\|\sqrt{\widetilde g}\ {\widetilde g}^{ij}\nabla_i e_\alpha\psi_\alpha^j\right\|_{\mathcal H^1}\left[(u-u_{2\theta})\eta\right]_{\mathrm{BMO}}\\
&\lesssim \|\psi_{\alpha}\|_{L^{2}(B_{2\theta})}\|\nabla{e}_{\alpha}\|_{L^{2}(B_{2\theta})}
\left[(u-u_{2\theta})\eta\right]_{\mathrm{BMO}}\\
&\lesssim\|\nabla{u}\|_{L^{2}(B_{2\theta})}\|\psi_{\alpha}\|_{L^{2}(B_{2\theta})}\|\nabla{u}\|_{M^{p,p}(B_{1})},
\end{align*}
where we have used the fact
$$\left[(u-u_{2\theta})\eta\right]_{\mathrm{BMO}}
\le C\left[u\right]_{\mathrm{BMO(B_{2\theta})}}\le C\left\|\nabla u\right\|_{M^{p,p}(B_1)}.
$$
This, combined with H\"older's inequality,  implies
\begin{equation}\label{psi_estimate}
\left(\theta^{p-n}\int_{B_{\theta}}|\psi_{\alpha}|^{p}\right)^{\frac{1}{p}}
\leq \,C\epsilon_{0}\left\|\nabla{u}\right\|_{M^{p,p}(B_{1})}.
\end{equation}
\noindent{\it Step 5}. Decay estimation of $\nabla u$:
Putting (\ref{phi_grad_estimate}) and (\ref{psi_estimate}) together, we have
that for some $0<\alpha_0<1$,
\begin{equation}\label{411}
   \left((\tau\theta)^{p-n}\int_{B_{\tau\theta}}|\nabla{u}|^{p}\right)^{\frac{1}{p}}
   \leq C\Big(\epsilon_0+\tau^{\alpha_0}+\tau^{1-\frac{n}{p}}\epsilon_{0}\Big)
\Big\|\nabla{u}\Big\|_{M^{p,p}(B_{1})}
\end{equation}
holds  for any $0<\tau<1$ and $0<\theta<\frac12$.
Now we claim that for some $\alpha_0\in (0,1)$, it holds
\begin{equation}\label{thm4c}
 \left \|\nabla u\right\|_{M^{p,p}(B_{\frac{\tau}{4}})}\leq
C\Big(\epsilon_0+\tau^{\alpha_0}+\tau^{1-\frac{n}{p}}\epsilon_{0}\Big)
\left\|\nabla{u}\right\|_{M^{p,n-p}(B_{1})}, \ \forall 0<\tau<1.
\end{equation}
To show (\ref{thm4c}), let $B_{s}(y)\subset{B}_{\frac{\tau}{4}}$. We divide it into three cases:\\
(a) $y\in{B}_{\frac{\tau}{4}}\cap{B}^{\pm}$ and $s<|y^{n}|$. As remarked in the begin of proof,
we have that for some $0<\alpha_0<1$,
\begin{align*}
\left(s^{p-n}\int_{B_{s}(y)}|\nabla{u}|^{p}\right)^{\frac{1}{p}}&
\leq C\left(\frac{s}{|y^{n}|}\right)^{\alpha_0}\left(|y^{n}|^{p-n}\int_{B_{|y^{n}|}(y)}|\nabla{u}|^{p}\right)^{\frac{1}{p}}\\
&\leq C\left(\frac{s}{|y^{n}|}\right)^{\alpha_0}\left((2|y^{n}|)^{p-n}\int_{B_{2|y^{n}|}(y',0)}|\nabla{u}|^{p}\right)^{\frac{1}{p}}\\
&\leq C\left(\left(\frac{\tau}{2}\right)^{p-n}\int_{B_{\frac{\tau}{2}}(y',0)}|\nabla{u}|^{p}\right)^{\frac{1}{p}}
\ (\hbox{since} \ |y^n|\le\frac{\tau}4) \\
&\leq C(\epsilon_0+\tau^{\alpha_0}+\tau^{1-\frac{n}{p}}\epsilon_{0})\|\nabla{u}\|_{M^{p,p}(B_{1})}
\ (\hbox{by}\ (\ref{411})).
\end{align*}
\noindent (b) $y\in{B}_{\frac{\tau}{4}}\cap{B}^{\pm}$ and $s\ge |y^{n}|$. Then we have
$\displaystyle B_{s}(y)\subset{B}_{|y^{n}|+s}(y',0)\subset{B}_{2s}(y',0)$. Hence
\begin{align*}
\left(s^{p-n}\int_{B_{s}(y)}|\nabla{u}|^{p}\right)^{\frac{1}{p}}&
\leq 2^{\frac{n-p}{p}}\left((2s)^{p-n}\int_{B_{2s}(y',0)}|\nabla{u}|^{p}\right)^{\frac{1}{p}}\\
&\leq C\Big(\epsilon_0+\tau^{\alpha_0}+\tau^{1-\frac{n}{p}}\epsilon_0\Big)
\left\|\nabla{u}\right\|_{M^{p,p}(B_{1})} \ (\hbox{by}\ (\ref{411})).
\end{align*}
\noindent (c) $y\in{B}_{\frac{\tau}{4}}\cap\Gamma_1$, i.e. $y^n=0$.  Then it follows directly
from (\ref{411}) that
$$\left(s^{p-n}\int_{B_s(y)}|\nabla u|^p\right)^\frac{1}{p}
\le C\Big(\epsilon_0+\tau^{\alpha_0}+\tau^{1-\frac{n}{p}}\epsilon_0\Big)
\left\|\nabla u\right\|_{M^{p,p}(B_1)}.$$
Combining (a), (b) and (c) together and taking supremum over
all $B_s(y)\subset B_{\frac{\tau}4}$,  we obtain \eqref{thm4c}.

It is now clear that by first choosing sufficiently small $\tau$ and then
sufficiently small $\epsilon_{0}$, we have
$$\left\|\nabla u\right\|_{M^{p,p}(B_{\frac{\tau}4})}\le \frac12\left\|\nabla u\right\|_{M^{p,p}(B_1)}.$$
Iterating this inequality finitely many time yields that there exists $\alpha_1\in (0,1)$ such that  for any $x\in B_\frac14$ and $0<r\le \frac12$, it holds
$$r^{p-n}\int_{B_r(x)}\left|\nabla{u}\right|^{p}\,dx\leq C\,r^{p\alpha_1}
\left\|\nabla{u}\right\|_{M^{p,p}(B_{1})}^p.$$
This immediately implies $\displaystyle u\in{C}^{\alpha_1}(B_{\frac{1}{2}})$. The proof is now completed.
\end{proof}

\section{Lipschitz and piecewise $C^{1,\alpha}$-estimate}

In this section, we will first establish both Lipschitz and piecewise $C^{1,\alpha}$-regularity
for stationary harmonic maps on domains with piecewise $C^{0,1}$-metrics, under
a smallness condition of energy. Then we will sketch a proof of Theorem
\ref{thm1}.

\begin{theorem}\label{thm51}There exist $\epsilon_{0}>0$ and $\beta_{0}\in(0,1)$ depending only on $n, g$ such that if
the metric $g\in{C}^{0,1}(B_{1}^{\pm}\cup\Gamma_1)$ satisfies the condition (\ref{jump_cond})
on $\Gamma_1$, and
$u\in{W}^{1,2}(B_{1},N)$ is a stationary harmonic map on $(B_1,g)$ satisfying
\begin{equation}\label{small_norm_energy1}
r_0^{2-n}\int_{B_{r_0}(x_0)}|\nabla{u}|_{g}^{2}\,dv_g\leq\epsilon_{0}^{2}
\end{equation}
for some $x_0\in{B}_{\frac{1}{2}}$ and $0<r_0\leq\frac{1}{4}$, then
$u\in{C}^{1,\beta_{0}}\Big(B_{\frac{r_0}{2}}(x_0)\cap\overline{{B}^{\pm}},N\Big)$,
and $u\in C^{0,1}\Big(B_{\frac{r_0}2}(x_0), N\Big)$.
\end{theorem}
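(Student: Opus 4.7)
The plan is to establish the piecewise $C^{1,\beta_{0}}$-regularity first and deduce the global Lipschitz conclusion as a corollary. Indeed, a map that is $C^{1,\beta_0}$ on each closed half $\overline{B^{\pm}_{r_0/2}(x_0)}$ and continuous across $\Gamma_1$ automatically lies in $W^{1,\infty}$: the tangential components of the one-sided gradient traces on $\Gamma_1$ must coincide (they are the gradient of the common trace), and the possible jump of the normal component is still bounded, so $u\in C^{0,1}$. For $x_0\in B_{r_0/2}\setminus \Gamma_1$ the metric $g$ is $C^{0,1}$ on a neighborhood of $x_0$, and the classical $C^{1,\alpha}$ regularity for stationary harmonic maps in smooth metric balls applies (via the hole-filling comparison argument of Giaquinta--Hildebrandt \cite{GH}). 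The whole issue therefore reduces to proving $C^{1,\beta_0}$ up to $\Gamma_1$ from either side.

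Fix $x_0\in \Gamma_1\cap B_{r_0/2}$ and, after translation, take $x_0=0$. By Theorem \ref{thm41} we already have $\mathrm{osc}_{B_r}u \le C r^{\alpha_0}$ for all $r\le r_0/2$. Freeze the metric at the origin: let $\bar g$ denote the piecewise constant metric whose values on $\{\pm x^n>0\}$ are the two one-sided limits of $g$ at $0$, so that by the piecewise $C^{0,1}$-assumption
\begin{equation*}
|g(x)-\bar g(x)|\le C|x|\le C r, \qquad x\in B_r.
\end{equation*}
For each $0<r\le r_0/2$ let $v:B_r\to \mathbb{R}^k$ be the unique $W^{1,2}$-solution of the linear transmission problem
\begin{equation*}
\frac{\partial}{\partial x_i}\bigl(\sqrt{\bar g}\,\bar g^{ij}\partial_j v\bigr)=0 \text{ in } B_r, \qquad v=u \text{ on } \partial B_r.
\end{equation*}
Because $\bar g$ is piecewise constant with a flat interface, classical transmission theory (of the Li--Nirenberg type \cite{ln}) gives $v\in C^{1,\alpha}(\overline{B_r^{\pm}})$ for every $\alpha\in(0,1)$, together with the two-sided Campanato decay
\begin{equation*}
\int_{B_{\tau r}^{\pm}}\bigl|\nabla v-(\nabla v)_{\tau r}^{\pm}\bigr|^{2}\,dx \le C\tau^{n+2\alpha}\int_{B_r^{\pm}}\bigl|\nabla v-(\nabla v)_{r}^{\pm}\bigr|^{2}\,dx,\qquad 0<\tau<\tfrac12.
\end{equation*}

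Setting $w=u-v\in W^{1,2}_0(B_r)$, testing the harmonic map system for $u$ and the linear system for $v$ against $w$, and subtracting, the ellipticity of $g$ together with $|g-\bar g|\le Cr$ and $\|w\|_{L^\infty(B_r)}\le 2\,\mathrm{osc}_{B_r}u$ yields
\begin{equation*}
\int_{B_r}|\nabla w|^{2}\,dx\le C r^{2}\int_{B_r}|\nabla v|^{2}\,dx + C\,(\mathrm{osc}_{B_r}u)\int_{B_r}|\nabla u|^{2}\,dx,
\end{equation*}
where the first term comes from the metric perturbation via Cauchy--Schwarz and the second from the curvature term $A(u)(\nabla u,\nabla u)_g$. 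Combining this estimate for $w$ with the Campanato decay of $v$ and using $\mathrm{osc}_{B_r}u\le Cr^{\alpha_0}$, one arrives at
\begin{equation*}
\int_{B_{\tau r}^{\pm}}\bigl|\nabla u-(\nabla u)^{\pm}_{\tau r}\bigr|^{2}\,dx \le C\tau^{n+2\alpha}\!\int_{B_r^{\pm}}\bigl|\nabla u-(\nabla u)_{r}^{\pm}\bigr|^{2}\,dx + C\,r^{\alpha_0}\!\int_{B_r}|\nabla u|^{2}\,dx
\end{equation*}
for every $0<\tau<\tfrac12$. A standard Campanato iteration (fix $\tau$ so that $C\tau^{2\alpha}\le \tfrac12$, then iterate on dyadic radii, using the smallness hypothesis \eqref{small_norm_energy1} and the Morrey control of $\nabla u$ provided in Step~5 of the proof of Theorem \ref{thm41}) yields, for some $\beta_0\in (0,\min\{\alpha,\alpha_0/2\})$,
\begin{equation*}
\int_{B_r^{\pm}}\bigl|\nabla u-(\nabla u)_r^{\pm}\bigr|^{2}\,dx\le C r^{n+2\beta_0},\qquad 0<r\le r_0/4,
\end{equation*}
which by Campanato's characterization gives $u\in C^{1,\beta_0}(\overline{B^{\pm}_{r_0/4}(x_0)})$. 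A covering argument and the interior case then give conclusion (ii), and (i) follows as explained in the first paragraph.

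The main obstacle is closing the iteration uniformly: the linear comparison gives the strong Campanato exponent $n+2\alpha$, but the perturbation contributes a term of order $r^{\alpha_0}\int_{B_r}|\nabla u|^{2}$, which only decays like $r^{n-2+\alpha_0+\epsilon}$ once the Morrey bound from Theorem \ref{thm41} is inserted; this forces the choice $\beta_0<\alpha_0/2$ and requires $\epsilon_0$ to be small enough that the perturbation term can be absorbed at every scale. A subordinate point is to justify the test-function manipulation across $\Gamma_1$ without boundary contributions, which is legitimate because $w\in W^{1,2}_0(B_r)$ and both equations hold in the distributional sense on all of $B_r$.
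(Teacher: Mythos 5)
There is a genuine gap at the heart of your iteration, and you have in fact half-noticed it yourself in your closing paragraph without resolving it. Your perturbation term is $C\,(\mathrm{osc}_{B_r}u)\int_{B_r}|\nabla u|^2$, which with the H\"older exponent $\alpha_0$ and Morrey decay supplied by Theorem \ref{thm41} is of order $r^{\,n-2+3\alpha_0}$. For the Campanato iteration to land above the $C^{1}$ threshold you need this exponent to exceed $n$, i.e.\ $\alpha_0>\tfrac23$; but Theorem \ref{thm41} only produces \emph{some} small $\alpha_0\in(0,1)$, and since $\alpha_0<1$ your own accounting ($r^{\,n-2+\alpha_0+\epsilon}$) never clears $n$. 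Choosing ``$\beta_0<\alpha_0/2$'' does not rescue this: a Campanato exponent below $n$ only re-proves H\"older continuity of $u$, not differentiability. The missing ingredient is a preliminary bootstrap of the H\"older exponent: compare $u$ on $B_r(y)$ with the solution of $\Delta_g v=0$, $v=u$ on $\partial B_r(y)$, use the Li--Nirenberg piecewise gradient bound $\|\nabla v\|_{L^\infty(B_{r/2}(y))}\le Cr^{-1}\,\mathrm{osc}_{B_r(y)}u$ together with $\int_{B_r}|\nabla(u-v)|^2\le Cr^{\,n-2+3\alpha_0}$ to get $(\tfrac r2)^{2-n}\int_{B_{r/2}}|\nabla u|^2\le Cr^{3\alpha_0}$, and iterate to obtain $u\in C^\alpha$ with $\int_{B_r}|\nabla u|^2\le Cr^{\,n-2+2\alpha}$ for \emph{every} $\alpha<1$. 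This is the paper's Step~1, and only after it is the exponent $3\alpha-2$ positive so that your freezing argument closes with $\beta_0=\tfrac{3\alpha-2}{2}$.

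A secondary structural point: you run the Campanato iteration on $\nabla u$ over half-balls $B_r^{\pm}$ centered on $\Gamma_1$ and then invoke ``a covering argument'' to treat balls centered near but off the interface. That patching step is exactly where the difficulty lies, because the normal derivative of the comparison map jumps across $\Gamma_1$, so the half-ball excess does not telescope for off-center balls that still meet the interface. The paper avoids this by replacing $\nabla u$ with the modified gradient $\widetilde{D}u$ in which $\partial u/\partial x_n$ is weighted by the piecewise constant factor $1+(k^{n/2}-1)\chi_{B_1^-}$; for the frozen-coefficient comparison map this quantity is continuous across $\Gamma_1$, so the Campanato decay holds on \emph{full} balls and combines cleanly with the interior case. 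You should either adopt this device or supply the off-center matching explicitly; as written, conclusion (i) (the global Lipschitz bound, which encodes precisely the boundedness of the jumping normal derivative) is not yet justified.
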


\begin{proof} The proof is based on both the hole filling argument and freezing coefficient method. It is
divided into two steps.

\noindent {\it {Step 1.}} $u\in C^\alpha(B_{\frac{3r_0}4}(x_0),N)$ for any $0<\alpha<1$.
To see this, recall Theorem \ref{thm41} implies that there exists $0<\alpha_0<\frac23$ such that
$u\in{C}^{\alpha_{0}}(B_{\frac{7r_0}{8}}(x_0))$ and for any $y\in{B}_{\frac{7r_0}{8}}(x_0)$,
it holds
\begin{equation}\label{51}
    s^{2-n}\int_{B_{s}(y)}|\nabla{u}|^{2}\,dx
    \leq\,C\left(\frac{s}{r}\right)^{2\alpha_{0}}r^{2-n}\int_{B_r(y)}|\nabla{u}|^{2}\,dx,
\ 0<s\leq r<\frac{r_0}8,
\end{equation}
and
\begin{equation}\label{52}
\mathrm{osc}_{B_r(y)}u\leq\,Cr^{\alpha_{0}}, \ 0<r<\frac{r_0}8.
\end{equation}
For $y\in B_{\frac{7r_0}8}(x_0)$ and $0<r<\frac{r_0}8$,
let $v:B_r(y)\to \mathbb R^k$ solve
\begin{equation}\label{510}
\begin{cases}
\Delta_{g}v=0&\mbox{in}~B_{r}(y)\\
v=u&\mbox{on}~\partial{B}_r(y).
\end{cases}
\end{equation}
Then by the maximum principle and (\ref{52}), we have
$$
\mathrm{osc}_{B_{r}(y)}v\leq\mathrm{osc}_{\partial{B}_{r}(y)}u\le Cr^{\alpha_0}.
$$
Moreover, since $g\in C^{0,1}(B_1^\pm\cup\Gamma_1)$, it is well-known (cf. \cite{ln} Theorem 1.1) that
$v\in C^{0,1}(B_{\frac{r}{2}}(y),\mathbb{R}^{k})$ and
$v\in{C}^{1,\beta}(B_{\frac{r}{2}}(y)\cap\overline{{B}^{\pm}},\mathbb R^k)$ for any $0<\beta<1$.

Now multiplying both the equations (\ref{harmonic}) and (\ref{510})  by $(u-v)$ and
subtracting each other and then integrating over
$B_{r}(y)$, we obtain
$$\int_{B_{r}(y)}|\nabla(u-v)|^{2}\,dx\lesssim\int_{B_{r}(y)}|\nabla{u}|^{2}|u-v|\lesssim\,r^{n-2+3\alpha_{0}}.$$
Since
$$\int_{B_{\frac{r}{2}}(y)}|\nabla{v}|^{2}\,dx\leq C\Big\|\nabla v\Big\|_{L^\infty(B_{\frac{r}2(y)})}^2r^{n},$$
we obtain
$$\left(\frac{r}{2}\right)^{2-n}\int_{B_{\frac{r}{2}}(y)}|\nabla{u}|^{2}\,dx
\leq C\Big(\left\|\nabla v\right\|_{L^\infty(B_{\frac{r}2}(y))}^2 r^2+r^{3\alpha_{0}}\Big)
\leq Cr^{3\alpha_{0}}.$$
This, combined with Morrey's decay lemma, yields
$\displaystyle u\in{C}^{\frac{3\alpha_0}2}(B_{\frac{7r_0}8}(x_0))$. Repeating this argument, we
can show that $\displaystyle u\in{C}^{\alpha}(B_{\frac{3r_0}{4}}(x_0))$ for any
$0<\alpha<1$, and
\begin{equation}\label{520}
r^{2-n}\int_{B_{r}(y)}|\nabla{u}|^{2}\,dx\leq\,Cr^{2\alpha},
\ \forall y\in{B}_{\frac{3r_0}{4}}(x_0), \ 0<r<\frac{r_0}{4}.
\end{equation}
\noindent{\it Step 2}. There exists $0<\beta_0<1$ such that
 $u\in C^{1,\beta_0}\Big(B_{\frac{r_0}{2}}(x_0)\cap \overline{B^\pm}, N\Big)$.
The proof is divided into two cases.

\noindent{\it {Case I.}} $\displaystyle x_0=(x_0',x_0^{n})\in{B}_1^{\pm}$. We may assume $\displaystyle 0<r_0<|x_0^{n}|$ so that $B_{r_0}(x_0)\subset B^\pm$.
For $B_{r}(x)\subset B_{r_0}(x_0)$,  let $v:B_r(x)\to\mathbb R^k$ solve
\begin{equation}\label{530}
\begin{cases}
\Delta_{g}v=0 &\mbox{in}~B_{r}(x),\\
v=u &\mbox{on}~\partial{B}_{r}(x).
\end{cases}
\end{equation}
Then by Step 1, we have that for any $\frac23<\alpha<1$,
\begin{equation}\label{56}
\int_{B_{r}(x)}|\nabla(u-v)|^{2}\,dx\leq C\int_{B_{r}(x)}|\nabla{u}|^{2}|u-v|\,dx\le C\,r^{3\alpha+n-2}.
\end{equation}
Moreover, since $g\in C^{0,1}(B_{r_0}(x_0))$, we have that for any $0<\beta<1$, $v\in{C}^{1,\beta}(B_{\frac{r}2}(x))$ and
\begin{equation}\label{57}
\fint_{B_{s}(x)}\left|\nabla{v}-(\nabla{v})_{B_{s}(x)}\right|^{2}\,dx\leq C
(\frac{s}{r})^{2\beta}\fint_{B_{r}(x)}\left|\nabla{u}-(\nabla{u})_{B_r(x)}\right|^{2}\,dx,\quad0<s\le\frac{r}2.
\end{equation}
Henceforth, we denote  $\displaystyle\fint_{E}f=\frac{1}{|E|}\int_E f\,dx$.
Combining \eqref{56} and \eqref{57}\footnote{note that (\ref{57}) trivially
holds for $\frac{r}2\le s\le r$.}, we obtain that  for any $0<\theta<1$,
\begin{align*}
\fint_{B_{\theta{r}}(x)}\left|\nabla{u}-(\nabla{u})_{B_{\theta{r}}(x)}\right|^{2}\,dx&
\leq 2\Big[\fint_{B_{\theta{r}}(x)}\left|\nabla{u}-\nabla{v}\right|^{2}\,dx+\fint_{B_{\theta{r}}(x)}\left|\nabla{v}-(\nabla{v})_{B_{\theta{r}}(x)}\right|^{2}\,dx
\Big]\\
&\leq C\Big[\theta^{2\beta}\fint_{B_{r}(x)}\left|\nabla{u}-(\nabla{u})_{B_{r}(x)}\right|^{2}\,dx
+\theta^{-n}r^{3\alpha-2}\Big].
\end{align*}
For $\frac{3\alpha-2}{2}<\beta_0<\beta$, let $0<\theta_0<1$ be such that $C\theta_0^{2\beta}
=\theta_0^{2\beta_0}$. Then we have
\begin{equation}\label{540}
\fint_{B_{\theta_0{r}}(x)}\left|\nabla{u}-(\nabla{u})_{B_{\theta_0{r}}(x)}\right|^{2}\,dx
\le\theta_0^{2\beta_0}
\fint_{B_{{r}}(x)}\left|\nabla{u}-(\nabla{u})_{B_{{r}}(x)}\right|^{2}\,dx
+Cr^{3\alpha-2}.
\end{equation}
Iterating (\ref{540}) $m$-times, $m\ge 1$,  yields
\begin{eqnarray}\label{550}
\fint_{B_{\theta_0^m{r}}(x)}\left|\nabla{u}-(\nabla{u})_{B_{\theta_0{r}}(x)}\right|^{2}\,dx
&\le&\left(\theta_0^m\right)^{2\beta_0}
\fint_{B_{{r}}(x)}\left|\nabla{u}-(\nabla{u})_{B_{{r}}(x)}\right|^{2}\,dx\nonumber\\
&&+C(\theta_0^mr)^{3\alpha-2}\sum_{j=1}^m \theta_0^{j(2\beta_0-(3\alpha-2))}\\
&\le& (\theta_0^m)^{3\alpha-2}\Big[\fint_{B_{{r}}(x)}\left|\nabla{u}-(\nabla{u})_{B_{{r}}(x)}\right|^{2}\,dx+   Cr^{3\alpha-2}\Big].\nonumber
\end{eqnarray}
This clearly implies that $\displaystyle\nabla u\in C^{\frac{3\alpha-2}2}(B_{r_0}(x_0))$.

\medskip
\noindent{\it{Case II.}}
$x_0=(x_0',0)\in\Gamma_1$. For simplicity, we assume $x_0'=0$.  Define the piecewise constant
metric $\bar g$ on $B_1$ by letting
$$\bar{g}(x)=\begin{cases} \lim_{t\downarrow 0^+} g(0', t) &  \ x\in B_1^+\\
 \lim_{t\uparrow 0^-} g(0', t) &  \ x\in B_1^-.
\end{cases}
$$
Then we have
\begin{equation}\label{piece_lip}
\left|g(x)-\bar{g}(x)\right|\le C|x|, \ x\in B_1.
\end{equation}
Moreover, by suitable dilations and rotations of the coordinate system,  (\ref{jump_cond}) implies that
there exists a positive constant $k\not=1$ such that
$$\bar g(x)=(1+(k-1)\chi_{B_1^-}(x)) g_0,\  x\in B_1,$$
where  $\displaystyle\chi_{B_1^-}$
is the characteristic function of $B_1^-$.

For $0<r<\frac{r_0}2$, let $v:B_r(0)\to\mathbb R^k$ solve
\begin{equation}\label{harm_fun}
\begin{cases}
\Delta_{\bar g} v=0 & \ {\rm{in}}\ B_{r}(0),\\
v=u & \ {\rm{on}}\ \partial B_r(0).
\end{cases}
\end{equation}
Then we have
$$\hbox{osc}_{B_r(0)} v\le \hbox{osc}_{B_r(0)} u\le Cr^\alpha, \
\ \int_{B_r(0)}|\nabla v|^2\,dx\le C\int_{B_r(0)}|\nabla u|^2\le Cr^{n-2+2\alpha}.$$
Multiplying (\ref{harmonic}) and (\ref{harm_fun}) by $(u-v)$ and integrating over $B_r(0)$, we obtain
\begin{eqnarray*}
&&\int_{B_r(0)}|\nabla(u-v)|^2\,dx\nonumber\\
&\le&\int_{B_r(0)} g^{ij} (u-v)_i (u-v)_j \sqrt{g}\,dx\nonumber\\
&\le & C\int_{B_r(0)}|\nabla u|^2 |u-v|\,dx
+\int_{B_r(0)}|\sqrt{g}g^{ij}-\sqrt{\bar{g}}{\bar g}^{ij}||v_i||(u-v)_j|\,dx\nonumber\\
&\le& C \hbox{osc}_{B_r(0)} v\int_{B_r(0)}|\nabla u|^2\,dx +Cr^2\int_{B_r(0)}|\nabla v|^2+\frac12\int_{B_r(0)}|\nabla(u-v)|^2\,dx
\nonumber\\
&\le& Cr^{n-2+3\alpha}+Cr^{n+\alpha}+\frac12\int_{B_r(0)}|\nabla(u-v)|^2\,dx.
\end{eqnarray*}
This implies
\begin{equation}
\int_{B_r(0)}|\nabla(u-v)|^2\,dx\le Cr^{n-2+3\alpha}.\label{500}
\end{equation}
It is well-known that $v\in C^\infty\Big(\overline{B_s^\pm(0)}\Big)$ for any $0<s<r$. In fact,
(\ref{harm_fun}) is equivalent to:
\begin{equation}\label{harm_fun1}
\frac{\partial}{\partial x_i}\Big((1+(k^{\frac{n}2}-1)\chi_{B_1^-})\frac{\partial v}{\partial x_i}\Big)=0,
\ {\rm{in}}\ B_r(0),
\end{equation}
we conclude \\
(i) $\frac{\partial v}{\partial x_n}$ satisfies the jump property on $\Gamma_1$:
$$
\lim_{x_n\downarrow 0^+} \frac{\partial v}{\partial x_n}(x', x_n)
=k^{\frac{n}2}\lim_{x_n\uparrow 0^-} \frac{\partial v}{\partial x_n}(x', x_n),
\ \ \forall (x',0)\in \Gamma_1\cap B_r(0).
$$
(ii) $\nabla^\alpha v\in C^0(B_r(0))$ for any multi-index $\alpha=(\alpha_1,\cdots,
\alpha_{n-1}, 0)$. \\
(iii) $\nabla v\in L^\infty(B_s(0))$ for any $0<s<r$, and
\begin{equation}\label{lip_bound}
\left\|\nabla v\right\|^2_{L^\infty(B_{\frac{r}2}(0))}\le C r^{2-n}\int_{B_r(0)}|\nabla u|^2.
\end{equation}
For $f:B_r(0)\to\mathbb R^k$, set
\begin{equation}\label{new_deriv}
\widetilde {D}f:=\Big(\frac{\partial f}{\partial x_1},
\cdots, \frac{\partial f}{\partial x_{n-1}}, (1+(k^{\frac{n}2}-1)\chi_{B_1^-})\frac{\partial f}{\partial x_n}
\Big),
\end{equation}
and let $\displaystyle\Big({\widetilde{D}f}\Big)_s=\fint_{B_s(0)}\widetilde{D} f\,dx$
denote the average of $\widetilde {D}f$ over $B_s(0)$.
Then we have that  for any $0<\beta<1$,
\begin{equation}\label{580}
\fint_{B_s(0)}\left|\widetilde{D}v-(\widetilde{D}v)_s\right|^2\,dx
\le C\left(\frac{s}{r}\right)^{2\beta}
\fint_{B_r(0)}\left|\widetilde{D}u-(\widetilde{D}u)_r\right|^2\,dx,
\ \forall 0<s\le r.
\end{equation}
Combining (\ref{500}) with (\ref{580}) yields that for any $0<\theta<1$,
\begin{equation}\label{591}
\fint_{B_{\theta r}(0)}\left|\widetilde{D}u-(\widetilde{D}u)_{\theta r}\right|^{2}\,dx\le
C\theta^{2\beta}\fint_{B_{r}(0)}\left|\widetilde{D}u-(\widetilde{D}u)_r\right|^{2}\,dx
+C\theta^{-n}r^{3\alpha-2}.
\end{equation}
As in Case I, iteration of (\ref{591}) yields that for any $0<s\le r$, it holds
\begin{equation}\label{592}
\fint_{B_s(0)}\left|\widetilde{D}u-(\widetilde{D}u)_s\right|^{2}\,dx\le
C\left(\frac{s}{r}\right)^{3\alpha-2}\fint_{B_{r}(0)}\left|\widetilde{D}u-(\widetilde{D}u)_r\right|^{2}\,dx
+Cs^{3\alpha-2}.
\end{equation}
This, combined with Case I, can imply that for any $B_r(x)\subset B_{r_0}(x_0)$ and
$0<s\le r$,
\begin{equation}\label{593}
\fint_{B_s(x)}\left|\widetilde{D}u-(\widetilde{D}u)_{x,s}\right|^{2}\,dx\le
C\left(\frac{s}{r}\right)^{3\alpha-2}\fint_{B_{r}(x)}\left|\widetilde{D}u-(\widetilde{D}u)_{x,r}\right|^{2}\,dx
+Cs^{3\alpha-2},
\end{equation}
where $(\widetilde{D}u)_{x,s}$ denotes the average of $\widetilde{D}u$ over $B_s(x)$.
It is readily seen that (\ref{593}) yields $\displaystyle
\nabla u\in C^{1,\frac{3\alpha-2}2}(B_{\frac{r_0}2}(x_0)
\cap \overline{B_1^\pm})$ and $\displaystyle
u\in C^{0,1}(B_{\frac{r_0}2}(x_0))$. This completes the proof.
\end{proof}

Now we sketch the proof of Theorem \ref{thm1}.

\begin{proof}[Proof of Theorem \ref{thm1} ]

Define the singular set
$$\Sigma=\left\{x\in\Omega:~\varliminf\limits_{r\rightarrow0}r^{2-n}\int_{B_{r}(x)}|\nabla{u}|^{2}\,dx\geq\epsilon^{2}_{0}\right\}.$$
Then by a covering argument we have $\displaystyle H^{n-2}(\Sigma)=0$ (see, for example,
Evans-Gariepy \cite{EG}). For
any $x_{0}\in\Omega\setminus\Sigma$, there exists $\displaystyle
0<r_{0}<\hbox{dist}(x_0,\partial\Omega)$ such that
$$r_{0}^{2-n}\int_{B_{r_{0}}(x)}|\nabla{u}|^{2}\,dx\leq\epsilon^{2}_{0}.$$
Hence by Theorem \ref{thm21}, Theorem \ref{thm41}, and Theorem \ref{thm51},
we have
$$\displaystyle u\in C^{1,\alpha}\Big(B_{\frac{r_0}2}(x_0)\cap\overline{\Omega^\pm},N\Big)
\ {\rm{and}}\ \displaystyle u\in C^{0,1}\Big(B_{\frac{r_0}2}(x_0), N\Big),$$
for some $0<\alpha<1$. In particular, we have
$$\lim_{r\downarrow 0}r^{2-n}\int_{B_r(x)}|\nabla u|^2\,dx=0, \ \forall x\in B_{\frac{r_0}2}(x_0),$$
so that $B_{\frac{r_0}2}(x_0)\cap\Sigma=\emptyset$ and hence $\Sigma$ is closed.
This completes the proof.
\end{proof}

\section{Harmonic maps to manifolds supporting convex distance functions}

In this section, we consider weakly harmonic maps $u$ from $(\Omega, g)$, with $g$ the piecewise Lipschitz continuous metric as in Theorem 1.1, to $(N,h)$, whose universal cover $(\widetilde N,
\widetilde h)$ supports a convex distance function square $d^2_{\widetilde N}(\cdot,p)$ for
any $p\in\widetilde{N}$. We will establish both the global Lipschitz continuity and
piecewise $C^{1,\alpha}$-regularity  for such harmonic maps $u$. This can be viewed
as a generalization of the well-known regularity theorem by
Eells-Sampson \cite{ES} and  Hildebrandt-Kaul-Widman \cite{HKW}.

The crucial step is the following theorem on H\"older continuity.
\begin{theorem}\label{thm61}
Assume that the metric $g$ is bounded measurable on $\Omega$, i.e. there exist two constants $0<\lambda<\Lambda<+\infty$ such that $\displaystyle \lambda \mathbb I_n\le g(x)
\le \Lambda\mathbb I_n$ for a.e. $x\in\Omega$. Assume also that the universal cover
$(\widetilde N,
\widetilde h)$ of $(N, h)$ supports a convex distance function square $d^2_{\widetilde N}(\cdot,p)$ for
any $p\in\widetilde{N}$. If
$\displaystyle u\in H^1(\Omega, N)$ is a weakly harmonic map, then there exists $\alpha\in (0,1)$
such that $\displaystyle u\in {C}^{\alpha}(\Omega,N)$.
\end{theorem}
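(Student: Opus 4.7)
The plan is to follow the classical Hildebrandt--Kaul--Widman / J\"ager--Kaul strategy: pass locally to the universal cover, exploit convexity of $d^2_{\widetilde N}$ to produce non-negative subsolutions of a uniformly elliptic equation with bounded measurable coefficients, and then apply De Giorgi--Nash--Moser theory.

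Fix $x_0\in\Omega$ and a small ball $B_R=B_R(x_0)\Subset\Omega$. Since $B_R$ is simply connected and $\Pi:\widetilde N\to N$ is a Riemannian covering, one can lift $u|_{B_R}$ to $\widetilde u\in H^1(B_R,\widetilde N)$; because $\Pi$ is a local isometry, $\widetilde u$ is weakly harmonic into $(\widetilde N,\widetilde h)$. For each $p\in\widetilde N$ set $\phi_p(x):=d^2_{\widetilde N}(\widetilde u(x),p)$. A chain-rule computation combining the harmonic map equation for $\widetilde u$ with the convexity of $d^2_{\widetilde N}(\cdot,p)$ yields, weakly in $B_R$,
\[
\Delta_g\phi_p\ \ge\ \mathrm{Hess}\bigl(d^2_{\widetilde N}(\cdot,p)\bigr)(d\widetilde u,d\widetilde u)_g\ \ge\ c_0|\nabla\widetilde u|^2_g\ \ge\ 0,
\]
where $c_0>0$ is a uniform lower bound on the Hessian of $d^2_{\widetilde N}$ over bounded regions of $\widetilde N$. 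In particular $\phi_p$ is a non-negative weak subsolution of a uniformly elliptic divergence-form operator with bounded measurable coefficients, and the De Giorgi--Nash--Moser theory gives
\[
\sup_{B_{r/2}(y)}\phi_p\ \le\ C\fint_{B_r(y)}\phi_p
\]
for every $B_r(y)\subset B_R$, with $C$ independent of $p$.

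For each ball $B_r(y)\subset B_R$ let $\bar u_{y,r}\in\widetilde N$ be the unique minimizer of $q\mapsto\fint_{B_r(y)}d^2_{\widetilde N}(\widetilde u,q)$ (uniqueness follows from uniform convexity), and set $V(y,r):=\fint_{B_r(y)}d^2_{\widetilde N}(\widetilde u,\bar u_{y,r})$. The sup-bound applied to $\phi_{\bar u_{y,r}}$ plus the triangle inequality gives $\mathrm{osc}^2_{B_{r/2}(y)}\widetilde u\lesssim V(y,r)$, so it suffices to prove a decay
\[
V(y,\theta r)\ \le\ \tau V(y,r)
\]
for some $\theta,\tau\in(0,1)$ independent of $(y,r)$. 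Testing the subsolution inequality against a standard cutoff produces a Caccioppoli-type bound $r^2\fint_{B_{r/2}(y)}|\nabla\widetilde u|^2_g\lesssim V(y,r)$, while the NPC Poincar\'e inequality for $\widetilde N$-valued $H^1$ maps gives the reverse $V(y,r/2)\lesssim r^2\fint_{B_{r/2}(y)}|\nabla\widetilde u|^2_g$. Iterating the resulting decay yields $V(y,r)\le Cr^{2\alpha}$, so that $\mathrm{osc}_{B_r(y)}\widetilde u\le Cr^\alpha$; descending via the local isometry $\Pi$ gives $u\in C^\alpha_{\mathrm{loc}}(\Omega,N)$.

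The main obstacle is extracting the strict contraction in the decay of $V$: subharmonicity together with Poincar\'e yields only $V(y,r/2)\le CV(y,r)$ with $C\ge 1$, so extra input is required. The necessary gain must be read off from the strict positivity $c_0|\nabla\widetilde u|^2_g$ on the right of the Hessian inequality, either by a hole-filling dichotomy (forcing either oscillation or energy to drop on a definite scale) or by upgrading $\widetilde u$ to a local energy minimizer into the NPC-like target (a J\"ager--Kaul style uniqueness) and then invoking the Korevaar--Schoen / Serbinowski oscillation theory for minimizers into NPC spaces.
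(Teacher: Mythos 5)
Your setup (lifting to the universal cover, the chain rule giving $\Delta_g d^2_{\widetilde N}(\widetilde u,p)\ge 0$, local boundedness, and the reduction to an oscillation decay at a definite scale) matches the paper up to the point where the real work begins, but the decisive step is missing, and you say so yourself: you never produce the strict contraction. Moreover, the two repairs you gesture at do not work under the stated hypotheses. The lower bound $\mathrm{Hess}\,d^2_{\widetilde N}(\cdot,p)\ge c_0\,\widetilde h$ with $c_0>0$ is a \emph{uniform convexity} assumption; the theorem assumes only convexity, so the term $c_0|\nabla\widetilde u|^2_g$ is not available, and with it go the uniqueness of your barycenter $\bar u_{y,r}$ and the ``NPC Poincar\'e inequality'' (the target is not assumed NPC). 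The Korevaar--Schoen/Serbinowski route fails for a second reason: $u$ is only a weakly harmonic map, not a local energy minimizer, and no minimality is ever established. So the proposal reduces the theorem to exactly the statement that needs proving.

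The paper closes this gap with Lin's covering--Harnack argument, which uses only $\Delta_g d^2(u,p)\ge 0$ and the De Giorgi--Nash--Moser theory for the measurable metric $g$. After rescaling to $B_1$ with $\mathcal C_1=\mathrm{diam}_N(u(B_1))$, cover $u(B_1)$ by $m=m(\epsilon)$ balls of radius $\epsilon\mathcal C_1$ and pick a point $p_{i_0}=u(x_{i_0})$ such that both $\sup_{B_1}d^2_N(u,p_{i_0})\ge\frac14\mathcal C_1^2$ and $H^n\bigl(u^{-1}(B^N(p_{i_0},\frac{1}{16}\mathcal C_1))\cap B_1\bigr)\ge c_0$ (a pigeonhole choice among a $\frac{1}{32}\mathcal C_1$-separated subnet of the centers). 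The function $f=\sup_{B_1}d_N^2(u,p_{i_0})-d_N^2(u,p_{i_0})$ is a nonnegative supersolution, and the weak Harnack inequality combined with the measure bound forces $\inf_{B_{1/2}}f\ge\theta_0^2\mathcal C_1^2$, i.e.\ $\sup_{B_{1/2}}d_N(u,p_{i_0})$ drops below $\sup_{B_1}d_N(u,p_{i_0})$ by a definite multiple of $\mathcal C_1$. For small $\epsilon$ this means at least one of the $m$ covering balls is no longer met by $u(B_{1/2})$; iterating at most $m$ times yields $\mathrm{diam}_N(u(B_{\delta_0}))\le\frac12\mathcal C_1$, and iteration of this diameter-halving gives the H\"older exponent. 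The quantitative gain thus comes from the Harnack inequality applied to the \emph{supersolution} $f$ together with the finiteness of the covering --- not from any strict positivity of the Hessian --- which is why mere convexity of $d^2_{\widetilde N}$ suffices. If you want to salvage your write-up, this dichotomy (``either the diameter halves, or one covering ball is expelled'') is the missing mechanism you would need to supply.
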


\begin{proof} Here we sketch a proof that is based on modifications of that by Lin \cite{l}.
Similar ideas have been used by Evans in his celebrated work \cite{evans1} and Caffarelli \cite{c}
for quasilinear systems under smallness conditions.
First, by lifting $u:\Omega\to N$ to a harmonic map $\widetilde{u}:\Omega\to \widetilde N$, we may
simply assume $(N,h)=(\widetilde N, \widetilde h)$ and $d^2_N(\cdot, p)$ is convex on $N$ for
any $p\in N$.

We first claim that 
\begin{equation}\label{subharm}
\Delta_{g}d^{2}(u,p)\geq\ 0.
\end{equation}
In fact, by the chain rule of harmonic maps (cf.  Jost \cite{j}), we have
$$\Delta_{g}d^{2}(u,p)=\nabla_{u}d^{2}(u,p)(\Delta_{g}u)+\nabla^{2}_{u}d^{2}(u,p)(\nabla{u},\nabla{u})_{g}.$$
Since $\Delta_{g}u\perp{T}_{u}N$, $\nabla_{u}d^{2}(u,p)\in{T}_{u}N$,
the first term in the right hand side vanishes. By the convexity of $d_N^2$,
the second term in the right hand side satisfies
$$\nabla^{2}_{u}d^{2}(u,p)(\nabla{u},\nabla{u})_{g}\geq 0.$$
Since $u\in H^1(\Omega, N)$,
by suitably choosing $p\in N$ and applying Poincar\'e inequality and
Harnack's inequality, (\ref{subharm}) implies
$\displaystyle u\in L^\infty_{\hbox{loc}}(\Omega, N)$.

For a set $E\subset N$, let  ${\rm{diam}}_N(E)$ denote the diameter of $E$ with respect to the distance
function $d_N(\cdot,\cdot)$. For any ball $B_r(x)\subset \Omega$, we want to show that
$u\in C^\alpha(B_{\frac{r}2}(x))$ for some $0<\alpha<1$. To do it, denote
$$\mathcal {C}_r:=\mathrm{diam}_N\left(u(B_{r}(x))\right)<+\infty.$$
We may assume $\mathcal{C}_r>0$ (otherwise, $u$ is constant on $B_r(x)$ and we are done).
Now we want to show that there exists $0<\delta_{0}=\delta_{0}(N)\le\frac12$
 such that
\begin{equation}\label{diam}\displaystyle\mathrm{diam}_N\left(u(B_{\delta_0r}(x))\right)
\leq \frac12 \mathcal{C}_r.
\end{equation}
Since  $u_r(y)=u(x+ry):B_1(0)\to N$ is a harmonic map  $(B_1(0), g_r)$, with $g_r(y)=g(x+ry)$,
we may, for simplicity, assume $x=0$ and $r=2$.
For any $0<\epsilon<\frac12$, since $u(B_1)\subset N$ is a bounded set,
there exists $m=m(\epsilon)\ge 1$ such that $u(B_1)$ is covered by
$m$ balls $B^1,\cdots, B^m$ of radius $\epsilon\mathcal{C}_1$.  Now we have\\
\noindent{\bf Claim}:
{\it  There exists a sufficiently small
$\epsilon>0$ such that $u(B_{\frac12})$ can be covered by at most $(m-1)$ balls among $B^1,\cdots, B^m$}.

To see this, let $x_i\in B_1$ such that $B^i\subset B_{2\epsilon\mathcal C_1}(p_i), \ p_i=u(x_i)$, for $1\le i\le m$.  Let
$1\le m'\le m$ be the maximum number of points in $\{p_i\}_{i=1}^m$ such that  the distance between any two of them is at
least $\frac{1}{32}\mathcal C_1$. Thus $B_{\frac{1}{16}\mathcal C_1}(p_i), 1\le i\le m'$, covers
$u(B_1)$.
Then there exists  $i_0\in \{1,\cdots, m'\}$ such that
\begin{equation}\label{triangle}
\frac{1}{4}\mathcal{C}_1^2\leq\sup_{x\in{B}_{2}}d^{2}_N(u(x),p_{i_0})\leq \mathcal{C}_1^2,
\end{equation}
and
\begin{equation}\label{size}
H^{n}\left(u^{-1}(B^{N}(p_{i_0}, \frac{1}{16}\mathcal{C}_1))\cap{B}_{1}\right)\geq\,c_{0},
\end{equation}
for some universal constant $c_0>0$,
where $B^N(p_{i_0}, R)$ is the ball in $N$ with center $p_{i_0}$ and radius $R$.

In fact, since
$$B_{1}\subset\bigcup_{i=1}^{m'}u^{-1}\left(B^N(p_{i},\frac{1}{16}\mathcal{C}_1)\right),$$
we have
$$\sum_{i=1}^{m'}H^{n}\left(u^{-1}(B^N(p_{i},\frac{1}{16}\mathcal{C}_1))\cap B_1\right)\geq
H^n(B_{1}).$$
Hence there exists $i_{0}\in\{1,\cdots,m'\}$ such that
$$H^n\left(u^{-1}(B^N(p_{i_{0}},\frac{1}{16}\mathcal{C}_1))\right)\geq c_0:=\frac1{m'}H^n(B_{1}).$$
This implies (\ref{size}). By the triangle inequality, (\ref{triangle}) also
holds.

Define
$$f(x):=\sup\limits_{z\in{B}_{1}}d^{2}_N(u(z),p_{i_0})-d^{2}_N(u(x),p_{i_0}),\ x\in B_1.$$
It is clear that $f\geq0$ in $B_{1}$, and (\ref{subharm}) implies
$$\Delta_{g}f\leq0, \ {\rm{in}}\ B_1.$$
By Moser's Harnack inequality, we have
\begin{align}\label{less}
\inf_{B_{\frac12}}f&\ge C\fint_{B_{1}}f\ge C\int_{B_{\frac12}}f\ge C\int_{B_{\frac12}\cap{u}^{-1}(B^N(p_{i_0},\frac{1}{16}
\mathcal{C}_1))}f\nonumber\\
&\ge C\left(\sup\limits_{{B}_{1}}d^{2}_N(u,p_{i_0})-\sup\limits_{B_{1}\cap{u}^{-1}(B^N(p_{i_0},\frac{1}{16}\mathcal{C}_1))}d^{2}_N(u,p_{i_0})\right)
H^{n}\left(B_{\frac12}\cap{u}^{-1}(B^N(p_{i_0},\frac{1}{16}\mathcal{C}_1))\right)\nonumber\\
&\ge C\left(\frac{1}{4}\mathcal {C}_1^2-\frac{1}{256}\mathcal{C}_1^2\right)c_{0}:=\theta_0^2\mathcal C_1^2
\end{align}
for some universal constant $\theta_0>0$. This implies
\begin{equation}\label{less1}
\sup_{z\in B_1}d_N(u(z),p_{i_0})-\sup_{z\in B_\frac12}d_N(u(z),p_{i_0})
\ge \theta_0\mathcal C_1=(1-\theta_0)\mathcal C_1.
\end{equation}

Now we argue that the claim follows from (\ref{less1}).  For, otherwise, we would have
that $\displaystyle u(B_\frac12)\cap B_{2\epsilon\mathcal{C}_1}(p_j)\not=\emptyset$
for all $1\le j\le m$. Let $z_0\in B_1$ be such that
$$\epsilon\mathcal{C}_1+d_N(u(z_0),p_{i_0})\ge \sup_{B_1}d_N(u(z),p_{i_0}).$$
Since $\displaystyle u(B_1)\subset\cup_{i=}^m B_{2\epsilon\mathcal{C}_1}(p_i)$,  there exists $p_{i_1}\in \{p_1,\cdots, p_m\}$ such that
$u(z_0)\in B_{2\epsilon\mathcal{C}_1}(p_{i_1})$. Since $\displaystyle u(B_\frac12)\cap  B_{2\epsilon\mathcal{C}_1}(p_{i_1})\not=\emptyset$,
there exists $z_1\in B_\frac12$ such that $u(z_1)\in B_{2\epsilon\mathcal{C}_1}(p_{i_1})$.  Therefore we have
$d_N(u(z_1),u(z_0))\le 2\epsilon\mathcal{C}_1$. Therefore we have
\begin{eqnarray*}\sup_{z\in B_1}d_N(u(z),p_{i_0})-\sup_{z\in B_\frac12}d_N(u(z),p_{i_0})
&\le& \epsilon\mathcal{C}_1+d_N(u(z_0),p_{i_0})-d_N(u(z_1),p_{i_0})\\
&\le& \epsilon\mathcal{C}_1+d_N(u(z_0), u(z_1))\le 3\epsilon\mathcal{C}_1,
\end{eqnarray*}
this contradicts (\ref{less1}) if $\epsilon>0$ is chosen to be sufficiently small.

From this claim, we have either \\
(i) $\displaystyle {\rm{diam}}_N(u(B_\frac12))\le\frac12 \mathcal {C}_1$. Then (\ref{diam}) holds  with $\delta_0= \frac12$, or\\
(ii)  $\displaystyle {\rm{diam}}_N(u(B_\frac12))>\frac12 \mathcal {C}_1$.  Then we consider $v(x)=u(\frac12x):B_1\to N$ and have
\begin{itemize}
\item $v$ is a harmonic map on $\displaystyle (B_1,g_{\frac12})$,  with the metric $
\displaystyle g_{\frac12}(x)=g(\frac12x)$.
\item  $\displaystyle\frac12\mathcal{C}_1<{\rm{diam}}_N(v(B_1))
\le \mathcal{C}_1$.
\item $v(B_1)$ is covered by at most $(m-1)$ balls $B_1,\cdots, B^{m-1}$ of radius $\epsilon\mathcal{C}_1$.
\end{itemize}
Thus the claim is applicable to $v$ so that  $u(B_{\frac14})=v(B_{\frac12})$ can be covered by at most $(m-2)$ balls among $B^1,\cdots, B^{m-1}$.

If $\displaystyle {\rm{diam}}_N(v(B_{\frac12}))\le \frac12\mathcal{C}_1$,  we are done.
Otherwise, we can repeat the above argument.
It is clear that the process can at most be repeated $m$-times, and the process will not be stopped at step  $k_0\le m$ unless $\displaystyle{\rm{diam}}_N u(B_{2^{-k_0}})\le \frac12 \mathcal{C}_1$. Thus (\ref{diam}) is proven.

It is readily seen that iteration of (\ref{diam})  implies  H\"older
continuity.
\end{proof}

\begin{proof}[{Proof of Theorem \ref{thm2}}]
First, by Theorem 6.1, and the argument from \S4, we can show that for some $0<\alpha<1$,
$$\int_{B_r(x)}|\nabla u|^2\,dx\le Cr^{n-2+2\alpha}, \ \forall B_r(x)\subset\Omega.$$
Then we can follow the same proof of Theorem 5.1 to show that $u\in C^{0,1}(\Omega)$ and
$u\in C^{1,\alpha}(\Omega^\pm\cup\Gamma,N)$.
\end{proof}

\newpage

\end{document}